\def\R {\mathbb{R}}
\def\Z {\mathbb{Z}}
\def\C {\mathbb{C}}
\title{On a homology of foliations defined by non-singular Morse-Smale flows}
\author{Masato Akizawa, Ryosuke Furuta and Shigeaki Miyoshi}
\address{c/o Department of Mathematics\\ Chuo University\\
   1-13-27 Kasuga Bunkyo-ku, Tokyo\\ 112-8551, Japan}
\address{c/o Department of Mathematics\\ Chuo University\\
   1-13-27 Kasuga Bunkyo-ku, Tokyo\\ 112-8551, Japan}
\address{Department of Mathematics\\ Chuo University\\
   1-13-27 Kasuga Bunkyo-ku, Tokyo\\ 112-8551, Japan}
\email{miyoshi@math.chuo-u.ac.jp}
\keywords{non-singular Morse-Smale flow, Morse homology}
\subjclass[2020]{primary~57R30, secondary~57R99, 57M99, 57R25}
\newtheorem{thm}{Theorem}[section]    
\newtheorem{lem}[thm]{Lemma}          
\newtheorem{prop}[thm]{Proposition}
\newtheorem{Thm}{Theorem}
\newtheorem{Assmptn}[thm]{Assumption}      
\theoremstyle{definition}
\newtheorem{defn}[thm]{Definition}     
\newtheorem{claim}[thm]{Claim}
\begin{document}

\baselineskip=18pt

\begin{abstract}    
We propose a definition of a homology of a one-dimensional oriented foliation defined by a non-singular Morse-Smale flow.  We also show the calculation of the homology of such a foliation which is naturally associated with a Seifert fibration.  
\end{abstract}

\maketitle

\section{Introduction} 
\label{intro}
The purpose of this paper is to propose a definition of a homology of non-singular Morse-Smale flows.  Indeed, since it does not depend on parameters of the flow, it would be an invariant of the one-dimensional oriented foliations each of which consists of flow-lines of a non-singular Morse-Smale flows as leaves.  We will call such a foliation {\em non-singular Morse-Smale foliation}.  

Suppose a non-singular Morse-Smale flow is given.  Then we have a round handle decomposition of the underlying manifold which is defined in a similar way as a gradient vector field of a Morse function defines an ordinary handle decomposition.  Under an assumption on the order of indices of periodic orbits (which implies that round handles are attached in line), it induces a filtration of the manifold by indices of the periodic orbits.  Then we could have something like a homology from this filtration.  In fact, we have a chain complex determined by the filtration and its homology is defined.  Thus we have a homology of round handle decompositions.  However, it may be obscure how the flow (or the foliation defined by the flow) determines directly the homology thus obtained.  Therefore we consider the Conley index of the periodic orbits and connecting annuli between them and describe those relations in a homological sense.  The description leads us to the definition of the boundary operator which reflects the structure of flow-lines (i.e., the foliated structure) and then we reach a homology of non-singular Morse-Smale foliations.  

The paper is organized as follows.  Section \ref{NMS_RHD} is a preliminary on the results of non-singular Morse-Smale flows and round handle decompositions due to D. Asimov \cite{Asimov_RHD}, \cite{Asimov_StrStable}, and J. Morgan \cite{Morgan}.  In Section \ref{Homology_RHD}, we introduce a definition of the homology of round handle decompositions under an assumption on the arrangement of attaching round handles as mentioned above.  In Section \ref{ConleyIndex_Bdry}, we investigate the aspect related to the boundary operators of our chain complex of round handle decompositions in view of Conley indices, which leads us to the definition of the desired boundary operator. For an example of a calculation of the homology, we exhibit a case of non-singular Morse-Smale flows naturally associated with Seifert fibrations in Section \ref{SF}.   

In this paper, we work in the smooth category and all the manifolds are orientable, unless otherwise stated. The coefficient of homology groups is $\Z$.  

\section{Non-singular Morse-Smale flows and round handle decompositions}\label{NMS_RHD}
Let $M$ be a compact manifold of dimension $n$, possibly with boundary.  A 
flow $\varphi$ on $M$ is called a {\em Morse-Smale flow} if it is generated by a vector field $X$ which satisfies the following conditions:
\begin{enumerate}
\item 
$X$ has a finite number of critical elements (singular points and periodic orbits), all of which are hyperbolic,
\item
for any critical elements $\sigma_{1}, \sigma_{2}$ of $X$, the unstable submanifold $W^{u}(\sigma_{1})$ is transversal to the stable submanifold $W^{s}(\sigma_{2})$,
\item
the nonwandering set $\Omega (X)$ of $X$ is equal to the union of the critical elements of $X$.  
\end{enumerate}
For the detailed property of Morse-Smale vector fields and flows, we refer the readers to \cite{Palis-Melo}.  
We concern a one-dimensional oriented foliation which consists of all the orbits (flow-lines) of non-singular Morse-Smale flows (NMS flows for short) as leaves.  We call such a foliation an {\em NMS-foliation}.  

Suppose $M$ is a compact connected manifold with boundary (possibly empty) $\partial M$, and a union $\partial_{-}M$ of connected components of the boundary is specified.  We call such a pair a {\em manifold pair} in this paper.  
A vector field and a flow on a manifold pair $(M, \partial_{-}M)$ are assumed to be transversal to $\partial M$ and pointing outward on $\partial_{-}M$ and inward on $\partial_{+}M = \partial M\setminus\partial_{-}M$, unless otherwise stated.  Note that the necessary and sufficient condition for the existence of a {\em non-singular} vector field on $(M, \partial_{-}M)$ is vanishing of its relative Euler characteristic $\chi (M, \partial_{-}M)$. 
 
Next, we define a round handle decomposition of $(M, \partial_{-}M)$.  Let $E^{k}$ denote a $k$-disk bundle over the circle $S^{1}$ where $k$ is an integer with $0\leq k\leq n-1 ; n=\mathrm{dim}M$.  Set $R^{k} = E^{k}\oplus E^{n-k-1}$.  Write $\partial_{-}R^{k} = \partial E^{k}\times_{S^{1}}E^{n-k-1}$ and $\partial_{+}R^{k} = E^{k}\times_{S^{1}}\partial E^{n-k-1}$.  We call $R^{k}$ or $(R^{k}, \partial_{-}R^{k})$ an {\em $n$-dimensional round handle of index $k$} or {\em $n$-dimensional round $k$-handle}. The zero-section of a round $k$-handle is called the {\em core circle} of the round handle.
Note that $R^{k}$ is orientable if and only if the both of the disk bundles $E^{k}$ and $E^{n-k-1}$ are trivial or non-trivial simultaneously.  We say the former is {\em trivial} and the latter is {\em twisted}.  

Let $(W, \partial_{-}W)$ be an $n$-dimensional manifold pair.  
For an embedding $\alpha :\partial_{-}R^{k}\rightarrow \partial_{+}W$, we define the identification space $W\cup_{\alpha}R^{k}$, the quotient space of the disjoint union $W\sqcup R^{k}$ identified by $\alpha$.  The resulting manifold is also denoted by $W +_{\alpha}R^{k}$ or simply by $W + R^{k}$.  Set $\partial_{-}(W +_{\alpha} R^{k}) = \partial_{-}W$ and note that $\partial_{+}(W +_{\alpha} R^{k}) = (\partial_{+}W\setminus\mathrm{Im}\alpha)\cup\partial_{+}R^{k}$.  We say that $W +_{\alpha}R^{k}$ is obtained from $W$ by {\em attaching a round $k$-handle $R^{k}$}.   
A {\em round handle decomposition} (RHD for short) of a manifold pair $(M, \partial_{-}M)$ is a representation 
\[
 M = (\partial_{-}M\times [0, 1])+R^{k_{1}}+\cdots +R^{k_{m}}
\]
which denotes successive operations of attaching round handles.  We say an RHD is {\em simple} if all the round handles appearing in the RHD are trivial.  
\vspace{2ex}

The following propositions are foundations of our research. 

\begin{prop}[Asimov \cite{Asimov_RHD}]
Let $(M, \partial_{-}M)$ is a manifold pair with $\chi (M, \partial_{-}M) = 0$.  Then, $(M, \partial_{-}M)$ has an RHD if $\mathrm{dim}M \neq 3$.  Moreover, if $M$ is not a M\"{obius band} $($and $\mathrm{dim}\neq 3)$, then $(M, \partial_{-}M)$ has a simple RHD.
\end{prop}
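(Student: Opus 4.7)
The plan is to upgrade a standard (ordinary) handle decomposition of the pair $(M,\partial_{-}M)$ to a round handle decomposition by grouping ordinary handles of consecutive indices into round handles. I would begin by fixing a Morse function $f:M\to[0,1]$ with $f^{-1}(0)=\partial_{-}M$, $f^{-1}(1)=\partial_{+}M$, all critical points interior, together with a gradient-like vector field, obtaining an ordinary handle decomposition with $c_{k}$ handles of index $k$. The hypothesis $\chi(M,\partial_{-}M)=0$ translates to $\sum_{k}(-1)^{k}c_{k}=0$, which is the numerical condition needed for the handles to pair off.

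The structural fact driving the construction is that an ordinary $k$-handle together with an ordinary $(k+1)$-handle forms a round $k$-handle $R^{k}$ precisely when the attaching sphere of the $(k+1)$-handle meets the belt sphere of the $k$-handle transversely in exactly one point: the union is then a disk bundle over the circle traced out by the cores. The strategy is to rearrange the decomposition so that every handle participates in exactly one such geometric complementary pair. I would proceed in three steps: (a) by the handle rearrangement theorem, assume the indices appear in non-decreasing order along the filtration; (b) using handle slides and handle trading, modify the decomposition so that the algebraic pairing between $k$-handles and $(k+1)$-handles (read off from the associated chain complex, whose Euler number is $\chi(M,\partial_{-}M)=0$) is realized geometrically as a one-to-one correspondence of complementary pairs; (c) replace each geometric pair by the corresponding round handle.

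The main obstacle lives in step (b): converting algebraic intersection numbers between attaching and belt spheres into geometric one-point intersections requires the Whitney trick to eliminate pairs of transverse intersection points of opposite sign, and this demands sufficient dimension in the ambient level set. For $n\geq 5$ it runs smoothly, the case $n=4$ requires an ad hoc argument using the restricted form of the required isotopies, and in dimension three the trick breaks down entirely, which is exactly why the hypothesis $\dim M\neq 3$ is imposed; in fact honest $3$-dimensional counterexamples exist.

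For the simple RHD refinement I would additionally keep track of orientation data: a round handle produced by the above pairing is trivial if and only if the closing-up $(k+1)$-handle is attached along an orientation-preserving loop in the handlebody previously built. Whenever a twisted round handle is unavoidable, one attempts to cancel its twistedness by a further slide against a handle of opposite index-parity, and the condition $\chi=0$ guarantees a partner exists except in one genuinely degenerate situation, namely when the whole decomposition reduces to a single twisted round $0$-handle whose core $S^{1}$ has non-orientable normal bundle in $M$. This forces $n=2$ and identifies $M$ with the Möbius band, which is precisely the exception claimed. The real difficulty throughout is step (b); once one has the geometric pairing, steps (a), (c), and the orientation bookkeeping are essentially routine.
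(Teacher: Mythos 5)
The paper does not prove this proposition at all: it is quoted from Asimov \cite{Asimov_RHD} as background, so your attempt has to be measured against Asimov's original argument. Measured that way, it has a genuine error at its core. A $k$-handle and a $(k+1)$-handle whose attaching sphere meets the belt sphere of the $k$-handle transversely in exactly one point form a \emph{cancelling} pair in the sense of Smale: attaching both gives back a manifold diffeomorphic to the one you started with, not a round handle. In the standard splitting $R^{k}=S^{1}\times D^{k}\times D^{n-k-1}=(I_{1}\times D^{k}\times D^{n-k-1})\cup(I_{2}\times D^{k}\times D^{n-k-1})$ the constituent $(k+1)$-handle runs over the $k$-handle geometrically \emph{twice}, with algebraic intersection number $0$ in the trivial case and $\pm 2$ in the twisted case; this is forced by $H_{*}(R^{k},\partial_{-}R^{k})\cong H_{*}(S^{1})\otimes H_{*}(D^{k},\partial D^{k})$, which is $\Z$ in degrees $k$ and $k+1$ and would vanish if the intersection number were $\pm 1$. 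So step (b) of your plan is aimed at the wrong geometric target, and step (c) would produce nothing.

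The reliance on the Whitney trick is also misplaced: the relevant isotopies of attaching spheres live in level sets of dimension $n-1$, so the trick is unavailable for $n=4$ just as for $n=3$, yet the theorem is asserted for all $n\neq 3$. Asimov's proof avoids intersection-cancellation entirely. Its engine is a \emph{fusion lemma}: a $k$-handle and a $(k+1)$-handle attached to $\partial_{+}W$ with disjoint attaching regions amalgamate, via a band in $\partial_{+}W$, into a single attached round $k$-handle, and the band can be chosen so that the result is untwisted except in the one degenerate situation that yields the M\"{o}bius band. The numerical bookkeeping also needs more than $\sum_{k}(-1)^{k}c_{k}=0$: one must make each partial count $r_{k}=c_{k}-r_{k-1}$ non-negative, which is arranged by inserting auxiliary cancelling pairs of ordinary handles; these insertions also supply independently attached partners for the fusion lemma, so one never has to realize a prescribed intersection pattern between handles of the original Morse decomposition. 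You are right that dimension $3$ fails for honest reasons (by Morgan's theorem quoted in this paper, only graph $3$-manifolds admit RHDs), but the failure is not located where your argument places it.
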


\begin{prop}[Morgan \cite{Morgan}]
 Suppose $(M, \partial_{-}M)$ is a manifold pair and $\mathrm{dim}M = 3$.  Then, $(M, \partial_{-}M)$ has an RHD if and only if $M$ is a graph manifold.  
\end{prop}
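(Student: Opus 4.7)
The plan is to prove the two implications separately. The key observation throughout is that every round handle in dimension three is topologically a solid torus (or solid Klein bottle, in the non-orientable case), and its attaching region $\partial_{-}R^{k}$ is a union of annuli or tori; consequently an RHD assembles $M$ out of solid tori glued along annuli and tori, which is precisely the structural feature one expects of a graph manifold.

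For the \emph{if} direction, suppose $M$ is a graph manifold, with a torus decomposition into Seifert fibered pieces $M_{1},\dots,M_{r}$. Each Seifert piece $M_{i}$ is constructed from its Seifert fibration $p_{i}\colon M_{i}\to \Sigma_{i}$: I would start with an ordinary handle decomposition of the base 2-orbifold $\Sigma_{i}$ in which every cone point lies inside a distinct 2-dimensional 0-handle, and then take the preimage under $p_{i}$ of each handle of $\Sigma_{i}$. The preimage of a 2-dimensional $k$-handle is a 3-dimensional round $k$-handle (the preimage of a 0-handle becomes a round 0-handle, twisted exactly when the enclosed fiber forces non-orientability of the bundle, and so on). The JSJ-type gluing of $M_{i}$ to $M_{i+1}$ along a torus is incorporated by closing $M_{i}$ off with a round 2-handle and starting $M_{i+1}$ with a compatibly attached round 0-handle.

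For the \emph{only if} direction, I would induct on the number $m$ of round handles in the given RHD $M = (\partial_{-}M\times[0,1]) + R^{k_{1}}+\cdots+R^{k_{m}}$. In the base case $W_{0}=\partial_{-}M\times[0,1]$; the non-singular transverse vector field forces $\chi(\partial_{-}M)=0$, so each component of $\partial_{-}M$ is a torus (or Klein bottle), making $W_{0}$ a trivial graph manifold. For the inductive step, assuming $W_{j}$ is a graph manifold whose boundary components are all tori, I analyze attaching $R^{k_{j+1}}$: a round 0-handle is added disjointly as a solid torus and trivially preserves the graph structure; a round 2-handle Dehn-fills a torus boundary component of $W_{j}$ with a solid torus and is likewise harmless. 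The round 1-handle case attaches a solid torus along two annuli sitting in boundary tori of $W_{j}$, and one must verify that the result remains a graph manifold, viewing the attached handle as a new Seifert piece fibered by its core circle and possibly refining the existing torus decomposition to accommodate it.

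The principal obstacle is the round 1-handle step in the inductive argument. The two attaching annuli may lie on the same boundary torus or on different ones, and their core curves need not be isotopic to fibers or sections of the existing Seifert fibrations on the adjacent pieces. The requirement that $\partial_{+}(W_{j}+R^{1})$ again consist only of tori forces each attaching annulus to be a tubular neighborhood of an essential simple closed curve in its boundary torus, but careful case analysis is still needed to track how the new solid torus interacts with the neighboring Seifert structures, how the JSJ tori must be re-chosen after attachment, and in particular how to handle the situation in which an attaching annulus is a meridional neighborhood of a fiber in an adjacent Seifert piece, since such an attachment can collapse a Seifert piece into a simpler one or merge two pieces across a newly-created separating torus.
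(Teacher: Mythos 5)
First, a caveat: the paper offers no proof of this proposition at all --- it is quoted as background from Morgan \cite{Morgan} in Section \ref{NMS_RHD} --- so there is no internal argument to compare yours against, and your proposal must stand as a reconstruction of Morgan's proof. Your overall architecture is the right one (in dimension three every round handle is a solid torus attached along $\emptyset$, two annuli, or a torus, and both implications reduce to bookkeeping with Seifert pieces), but two steps fail as written. In the \emph{if} direction, your treatment of the junction between adjacent Seifert pieces is incorrect: attaching a round $2$-handle to the JSJ torus of $M_{i}$ Dehn-fills it with a solid torus, and then ``starting $M_{i+1}$ with a round $0$-handle'' adds a \emph{disjoint} solid torus, so the result is not $M_{i}\cup_{T}M_{i+1}$. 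No handles should be inserted at the junction: one needs an RHD of each subsequent piece \emph{relative to} its incoming torus, i.e.\ of the pair $(M_{i+1},T)$, pulled back from a handle decomposition of the base orbifold rel the corresponding boundary circles, and these relative RHDs are simply concatenated, the torus $T$ appearing as an intermediate level $\partial_{+}W_{j}$. In the \emph{only if} direction the base case is also off: $\chi(M,\partial_{-}M)=0$ yields $\chi(\partial_{-}M)=\chi(\partial_{+}M)$, not $\chi(\partial_{-}M)=0$, so the fact that the relevant boundary components are tori must be extracted from the combinatorics of the RHD rather than from the Euler characteristic alone.

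More seriously, the round $1$-handle case that you defer to ``careful case analysis'' is not a loose end to be tidied up --- it is essentially the entire content of Morgan's theorem. The round $0$- and $2$-handle steps and the pullback construction over Seifert pieces are routine; what actually has to be proved is a lemma asserting that attaching a solid torus to a graph manifold along two annuli whose cores are essential in boundary tori again yields a graph manifold, with the attached solid torus fibered by circles parallel to its core and this fibration reconciled, case by case, with the fibrations of the adjacent pieces (both annuli on one boundary torus, on distinct tori of one piece, on tori of distinct pieces; attaching cores isotopic to fibers versus to sections), together with the verification that inessential attaching cores are excluded because they would produce non-torus components of $\partial_{+}$. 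Until that lemma is stated and proved, what you have is a correct plan of attack rather than a proof.
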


\begin{prop}[Asimov \cite{Asimov_RHD}]
If a  manifold pair $(M, \partial_{-}M)$ has an RHD, then $(M, \partial_{-}M)$ has an NMS flow whose periodic orbits are exactly the core circles of the RHD.   
\end{prop}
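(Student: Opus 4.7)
The plan is to construct the desired NMS vector field inductively along the RHD, by assembling local models on each piece and smoothing across the attaching hypersurfaces. On the initial collar $\partial_{-}M\times[0,1]$ I would take the vector field $-\partial/\partial t$, which points outward on $\partial_{-}M\times\{0\}$ and inward on $\partial_{-}M\times\{1\}$, matching the pair conventions. On a round $k$-handle $R^{k}=E^{k}\oplus E^{n-k-1}$ I would take the fiberwise radial expansion on the $E^{k}$-factor together with the fiberwise radial contraction on the $E^{n-k-1}$-factor, plus a unit rotational field on the base $S^{1}$. Since radial contraction and expansion are $O(k)\times O(n-k-1)$-equivariant, this model is globally defined on both trivial and twisted round handles. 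Its only critical element is the zero-section, which is a hyperbolic periodic orbit with $W^{u}$ tangent to $E^{k}$ and $W^{s}$ tangent to $E^{n-k-1}$; trajectories exit through $\partial_{-}R^{k}$ and enter through $\partial_{+}R^{k}$.

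I would then glue these local flows in the order of the RHD. When the $i$-th round handle is attached by $\alpha_{i}\colon\partial_{-}R^{k_{i}}\to \partial_{+}W_{i-1}$, the outward-pointing flow on $\partial_{-}R^{k_{i}}$ and the inward-pointing flow on $\alpha_{i}(\partial_{-}R^{k_{i}})\subset\partial_{+}W_{i-1}$ induce the same transverse orientation of the common hypersurface, so one can interpolate them in a thin bicollar by a partition of unity; transversality to the hypersurface prevents the interpolated field from acquiring zeros. Iterating over all handles produces a smooth non-singular vector field $X$ on $M$ whose boundary behaviour obeys the conventions, whose critical elements are exactly the core circles of the round handles, and for which every trajectory not on a core circle is transient, entering through $\partial_{+}M$ and exiting through $\partial_{-}M$. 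The nonwandering set of $X$ is therefore exactly the union of the core circles.

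It remains to verify the Morse-Smale transversality condition between the stable and unstable manifolds of pairs of periodic orbits. Since hyperbolic periodic orbits persist under $C^{1}$-small perturbations, I would apply a Kupka-Smale type genericity argument: perturb $X$ in tubular neighborhoods disjoint from the core circles, using Thom's transversality theorem to arrange $W^{u}(\sigma_{i})\pitchfork W^{s}(\sigma_{j})$ for every pair of critical elements, while preserving hyperbolicity, the boundary conventions, and the core circles themselves as invariant sets. This last step is where I expect the main obstacle: combining finitely many transversality perturbations without destroying those already achieved and without creating new critical elements requires careful book-keeping of the perturbation supports. The remaining components---the bundle-level model on each round handle, the smooth interpolation across each attaching hypersurface, and the analysis of the nonwandering set---are local and routine.
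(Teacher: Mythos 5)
The paper gives no proof of this proposition: it is quoted from Asimov \cite{Asimov_RHD} as a known foundation, and your outline is essentially Asimov's original argument (a rotation-plus-radial expansion/contraction model on each round handle, invariant under the structure group so it works for twisted handles too, glued by transverse interpolation across the attaching hypersurfaces, followed by a Kupka--Smale perturbation supported away from the core circles to achieve transversality of the invariant manifolds). The one step you flag as delicate is handled by the standard openness-and-density argument --- transversality for each pair of critical elements is an open dense condition, so finitely many perturbations can be combined, and the gradient-like filtration structure is open so no new nonwandering points appear --- hence your proposal is correct and follows the same route as the cited source.
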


\begin{prop}[Morgan \cite{Morgan}]\label{NMS2RHD}
If a manifold pair $(M, \partial_{-}M)$ has an NMS flow, then $(M, \partial_{-}M)$ has an RHD whose core circles are the periodic orbits of the flow.
\end{prop}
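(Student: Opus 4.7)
The plan is to mimic the classical derivation of a handle decomposition from a Morse function, with periodic orbits of the NMS flow playing the role of critical points. The first step is to construct a smooth Lyapunov function $f:M\to[0,1]$ satisfying: (i) $f^{-1}(0)=\partial_-M$ and $f^{-1}(1)=\partial_+M$; (ii) the critical set of $f$ is exactly the union $\gamma_1\cup\cdots\cup\gamma_m$ of periodic orbits of the flow; (iii) $f$ is constant on each $\gamma_i$, with mutually distinct critical values; and (iv) $Xf<0$ off the periodic orbits. Given such an $f$, I would choose regular values $0=c_0<c_1<\cdots<c_m=1$ separating the $f(\gamma_i)$ and set $M_i=f^{-1}([0,c_i])$. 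Then $M_0$ is a collar of $\partial_-M$, and the proof reduces to showing that each $M_i$ is obtained from $M_{i-1}$ by attaching a single round handle whose core circle is $\gamma_i$.

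To build $f$, I would use the no-cycle property of Morse-Smale flows to totally order $\gamma_1,\dots,\gamma_m$ consistently with the relation $\gamma_i\prec\gamma_j$ whenever $W^u(\gamma_i)\cap W^s(\gamma_j)\neq\emptyset$, and assign prospective values $t_1<\cdots<t_m$. By the stable manifold theorem for hyperbolic closed orbits each $\gamma_i$ admits a tubular neighborhood $U_i$ diffeomorphic to a $(D^{n-1-k_i}\times D^{k_i})$-bundle over $S^1$, twisted by the monodromy of the Poincar\'e return map, on which the flow linearises as $\dot\theta=1$, $\dot y=-A_iy$, $\dot z=B_iz$ with $A_i,B_i$ positive-definite; here $k_i=\dim z$ is the dimension of the expanding part of the return map. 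On $U_i$ set $f_i=t_i+\tfrac{1}{2}(|y|^2-|z|^2)$; the quadratic form is monodromy-invariant, so $f_i$ descends to the twisted neighborhood, and $Xf_i<0$ away from $\gamma_i$. Since $\Omega(X)=\bigcup\gamma_i$, the restriction of $X$ to $M\setminus\bigcup_iU_i^\circ$ is gradient-like, so by Conley's fundamental theorem of dynamical systems there is a smooth strict Lyapunov function there. An inductive partition-of-unity gluing, performed in the order $t_1<\cdots<t_m$, matches the local models $f_i$ along the compact strata of the heteroclinic web and yields the desired global $f$.

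To analyse a single level crossing, observe that near $\gamma_i$ the sublevel sets take the form $\{|y|^2-|z|^2\le 2(c-t_i)\}$. A deformation-retraction argument identical to the one used in Morse theory then identifies the elementary cobordism $f^{-1}([c_{i-1},c_i])$, up to a trivial collar, with the union of $M_{i-1}$ and $E^{k_i}\oplus E^{n-k_i-1}$ glued along $\partial_-R^{k_i}$, where the two disk-bundles over $S^1$ are trivial or twisted according to the orientability of $W^u(\gamma_i)$ and $W^s(\gamma_i)$ along $\gamma_i$. This is exactly a round $k_i$-handle in the sense of Section \ref{NMS_RHD}. Iterating over $i=1,\dots,m$ produces the required RHD whose core circles are the periodic orbits of the flow.

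The principal obstacle is the construction of the global smooth Lyapunov function: the local quadratic models must be stitched together while preserving strict monotonicity along every heteroclinic flow-line in $W^u(\gamma_i)\cap W^s(\gamma_j)$ and introducing no spurious critical points. Morse-Smale transversality together with the no-cycle condition make this possible in principle, but carrying out the induction carefully — so that at each stage the extension of $f$ past $t_i$ respects the normal form at all orbits meeting the current upper level set — is the real work in the argument. Once this Lyapunov function is in hand, the handle-attaching analysis is formal.
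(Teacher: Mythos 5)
The paper offers no proof of this proposition: it is quoted verbatim from Morgan's paper (and the underlying construction goes back to Meyer's energy functions for Morse--Smale systems and to Asimov), so there is nothing internal to compare against. Your outline is essentially the standard argument that those sources use --- build a smooth energy function whose critical set is exactly the union of periodic orbits, with $Xf<0$ elsewhere and distinct critical values ordered compatibly with the no-cycle partial order, then read off the round handles from the elementary cobordisms between regular levels --- and as a sketch it is sound. Two technical points deserve correction. First, you should not claim that the flow \emph{linearises} in a tubular neighborhood of $\gamma_i$: smooth linearization of a hyperbolic periodic orbit fails in general because of resonances. What you actually need, and what suffices, is a coordinate system flattening the local stable and unstable manifolds together with an adapted inner product, in which one checks directly that $\tfrac{d}{dt}\,\tfrac12\bigl(|y|^2-|z|^2\bigr)<0$ off $\gamma_i$; no normal form is required. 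Second, a partition-of-unity gluing of local Lyapunov functions $f_i$ does not automatically preserve $Xf<0$, since $X(\lambda f_1+(1-\lambda)f_2)$ acquires the term $(X\lambda)(f_1-f_2)$; this is precisely why Meyer's construction propagates the function along the flow from level sets rather than averaging, and your phrase ``partition-of-unity gluing'' names the wrong tool even though you correctly identify this step as the real work. With those repairs the argument is the accepted proof of the cited result.
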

\vspace{2ex}

Suppose that an NMS flow $\varphi$ is given on a manifold pair $(M, \partial_{-}M)$ and let $c_{1}$ and $c_{2}$ be periodic orbits of $\varphi$.  Suppose the index of $c_{i}$ is $k_{i}$ ($i = 1, 2$).  Since the unstable submanifold $W^{u}(c_{1})$ of $c_{1}$ and the stable submanifold $W^{s}(c_{2})$ of $c_{2}$ are transversal to each other, we have $W^{u}(c_{1})\cap W^{s}(c_{2}) = \emptyset$ if $k_{1} < k_{2}$.  Note that (un)stable submanifolds are {\em saturated}.  Recall that a subset of $M$ is {\em saturated} if it is a union of flow-lines, i.e., orbits of $\varphi_{t}$.  Since we are concerned with a foliation consisting of flow-lines of a flow, we mainly say {\em saturated} instead of {\em invariant}.  From now on, for a technical reason, we assume the following.  Set $W(c_{1}, c_{2}) = W^{u}(c_{1})\cap W^{s}(c_{2})$.  
%
%
\vspace{2ex}
\begin{Assmptn}
$W(c_{1}, c_{2}) = \emptyset$ if indices of $c_{1}$ and $c_{2}$ are equal.  
\end{Assmptn}
\vspace{2ex}
\noindent
With this assumption, for any NMS flow $\varphi$ the RHD associated with $\varphi$ can be 'totally ordered'.  More precisely, the associated RHD can be represented index-ascendingly as $ M = (\partial_{-}M\times [0, 1])+\sum_{i=1}^{m_{0}}R^{0}_{i}+\cdots +\sum_{i=1}^{m_{n-1}}R^{n-1}_{i}$ and the round handles with the same indices attached independently, that is, $\partial_{-}R^{k}_{i}\cap\partial_{+}R^{k}_{j} = \emptyset$.  
Thus, we have a filtration $\emptyset = M^{(-1)}\subset M^{(0)}\subset M^{(1)}\subset\cdots\subset M^{(n-1)} = M$, where $M^{(0)} = (\partial_{-}M\times [0, 1]) + \sum_{i=1}^{m_{0}}R^{0}_{i}$ and $M^{(k)} = M^{(k-1)} + \sum_{i=1}^{m_{k}}R^{k}_{i}$.  

\section{A homology of round handle decompositions}\label{Homology_RHD}
From now on, we assume that $M$ is a closed manifold throughout the paper.  
In this section, all round handles are assumed to be trivial.  
Let $M = \sum_{i=1}^{m_{0}}R^{0}_{i}+\cdots +\sum_{i=1}^{m_{n-1}}R^{n-1}_{i}$ be a simple RHD of $M$ and $\rho (M)$ denote this RHD.  Suppose $\emptyset = M^{(-1)}\subset M^{(0)}\subset M^{(1)}\subset\cdots\subset M^{(n-1)} = M$ is the associated filtration.  

\begin{prop}\label{RHD_ChainGrp}
For an integer $k$ with $0\leq k\leq n-1$, the relative homology group $H_{k+1}(M^{(k)}, M^{(k-1)})$ is a free Abelian group of rank $m_{k}$.
\end{prop}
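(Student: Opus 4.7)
The plan is to reduce the computation to a single round handle by excision, then use the Künneth formula on the trivial handle.

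First I would verify the local computation. Since the round handle is trivial, $R^k_i$ is diffeomorphic to $D^k\times D^{n-k-1}\times S^1$ with $\partial_-R^k_i = S^{k-1}\times D^{n-k-1}\times S^1$. Deformation retracting $D^{n-k-1}$ to a point, the pair $(R^k_i,\partial_-R^k_i)$ is homotopy equivalent to $(D^k\times S^1, S^{k-1}\times S^1)$. The Künneth formula for pairs then gives
\[
H_*(R^k_i,\partial_-R^k_i)\;\cong\;H_*(D^k,S^{k-1})\otimes H_*(S^1),
\]
which is concentrated in degrees $k$ and $k+1$, each isomorphic to $\Z$. In particular, $H_{k+1}(R^k_i,\partial_-R^k_i)\cong \Z$, generated by the class $[D^k,S^{k-1}]\times[S^1]$, where $[S^1]$ corresponds to the core circle of the round handle.

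Next I would pass from the single handle to the whole level. By hypothesis, the round $k$-handles are attached independently, so $R^k_i\cap R^k_j=\emptyset$ for $i\neq j$ inside $M^{(k)}$ and $R^k_i\cap M^{(k-1)}=\partial_-R^k_i$. Thus $M^{(k)}$ is the union of $M^{(k-1)}$ with the disjoint topological subspaces $R^k_1,\dots,R^k_{m_k}$, glued along their $\partial_-$-parts. By excision (removing the interior of $M^{(k-1)}$ from both terms of the pair, after a slight thickening so that excision applies on the interior of the attaching collar), one obtains
\[
H_*(M^{(k)},M^{(k-1)})\;\cong\;\bigoplus_{i=1}^{m_k}H_*(R^k_i,\partial_-R^k_i).
\]
Combining with the first step yields $H_{k+1}(M^{(k)},M^{(k-1)})\cong\Z^{m_k}$, a free abelian group of rank $m_k$ with a preferred basis given by the classes of the core circles of the $m_k$ round $k$-handles.

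The only subtle point is the use of excision at the boundary $\partial_-R^k_i\subset \partial_+M^{(k-1)}$: since the handles are attached along their boundary pieces and the attaching map is an embedding into $\partial_+M^{(k-1)}$, one should first enlarge $M^{(k-1)}$ to an open collar neighborhood inside $M^{(k)}$ so that the hypotheses of the excision theorem are fulfilled, or equivalently thicken $\partial_-R^k_i$ to an open collar inside $R^k_i$; this is routine and uses only the collar neighborhood theorem. I do not expect a genuine obstacle here. With this in place the proposition reduces to the local Künneth computation described above.
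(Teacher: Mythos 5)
Your proposal is correct and follows essentially the same route as the paper: excise down to the pairs $(R^k_i,\partial_-R^k_i)$ and then compute $H_{k+1}(R^k,\partial_-R^k)\cong\Z$ after collapsing the $D^{n-k-1}$ factor. The only cosmetic differences are that you invoke the K\"unneth formula where the paper simply identifies the pair with $(S^1\times D^k,\,S^1\times\partial D^k)$ and reads off the answer, and that you treat all $m_k$ handles in one excision while the paper's lemma attaches them one at a time.
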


\noindent
This proposition follows from the following lemma.  

\begin{lem}
Suppose that $(Z, \partial_{-}Z)$ is an $n$-dimensional manifold pair and $Y$ is obtained by attaching a trivial round $k$-handle to $\partial_{+}Z$: $Y = Z+R^{k}$.  Then the relative homology group $H_{k+1}(Y, Z)$ is isomorphic to $H_{k+1}(R^{k}, \partial_{-}R^{k})$, which is an infinite cyclic group.  
\end{lem}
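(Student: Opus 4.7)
The plan is to split the statement into two parts. First, I will prove the isomorphism $H_{k+1}(Y, Z) \cong H_{k+1}(R^{k}, \partial_{-}R^{k})$ by excision, and second, I will compute $H_{k+1}(R^{k}, \partial_{-}R^{k})$ directly using the triviality hypothesis.

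For the excision step, since $R^{k}$ is attached to $Z$ along an embedding $\alpha : \partial_{-}R^{k} \to \partial_{+}Z$, the intersection $Z \cap R^{k}$ (inside $Y$) is exactly $\alpha(\partial_{-}R^{k})$. Choosing a collar $\partial_{+}Z \times [0,1] \subset Z$ of $\partial_{+}Z$, I will remove a closed subset $U$ of $\Int Z$ whose closure lies in the interior of $Z$ (namely, everything outside a smaller collar), so excision applies to give
\[
H_{*}(Y, Z) \;\cong\; H_{*}(Y \setminus U,\; Z \setminus U).
\]
The pair on the right deformation retracts onto $(R^{k}, \partial_{-}R^{k})$ (collapsing the remaining collar of $\partial_{+}Z$ onto $\partial_{+}Z$ and then onto the image of $\partial_{-}R^{k}$), yielding the desired isomorphism. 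Equivalently, and perhaps more cleanly, one observes that $(Y, Z)$ is a good pair so $H_{*}(Y, Z) \cong \tilde{H}_{*}(Y/Z)$, and the quotient $Y/Z$ is manifestly $R^{k}/\partial_{-}R^{k}$.

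For the explicit computation, the triviality of $R^{k}$ gives $R^{k} = S^{1} \times D^{k} \times D^{n-k-1}$ and $\partial_{-}R^{k} = S^{1} \times S^{k-1} \times D^{n-k-1}$. Projecting out the contractible factor $D^{n-k-1}$ is a deformation retraction of pairs, so
\[
(R^{k}, \partial_{-}R^{k}) \;\simeq\; (S^{1} \times D^{k},\; S^{1} \times S^{k-1}).
\]
Since $H_{*}(S^{1})$ and $H_{*}(D^{k}, S^{k-1})$ are both free, the Künneth formula for pairs applies without Tor terms and gives
\[
H_{k+1}(S^{1} \times D^{k},\; S^{1} \times S^{k-1}) \;\cong\; \bigoplus_{i+j = k+1} H_{i}(S^{1}) \otimes H_{j}(D^{k}, S^{k-1}).
\]
As $H_{j}(D^{k}, S^{k-1}) = \Z$ only when $j = k$ (and vanishes otherwise) and $H_{i}(S^{1})$ is nontrivial only for $i = 0, 1$, the single surviving summand is $H_{1}(S^{1}) \otimes H_{k}(D^{k}, S^{k-1}) \cong \Z$. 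The degenerate case $k = 0$ is handled separately: $\partial E^{0} = \emptyset$ so $\partial_{-}R^{0} = \emptyset$, and $H_{1}(R^{0}) = H_{1}(S^{1} \times D^{n-1}) = \Z$ directly.

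The main obstacle is not conceptual but bookkeeping: one has to be a little careful in the excision step about the smooth-manifold-with-corners structure at $\partial E^{k} \times_{S^{1}} \partial E^{n-k-1}$, where $\partial_{-}R^{k}$ meets $\partial_{+}R^{k}$. This is why I prefer to phrase the argument at the homotopy level (via the good pair / quotient viewpoint), where these corners are invisible. Everything after the excision reduction is a mechanical Künneth computation.
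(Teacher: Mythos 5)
Your overall strategy coincides with the paper's: reduce $H_{k+1}(Y,Z)$ to $H_{k+1}(R^{k},\partial_{-}R^{k})$ by excision, then compute the latter from the trivialization. However, one step in your first version of the excision argument is false as stated: after excising the bulk $U$ of $Z$, the pair $(Y\setminus U,\, Z\setminus U)$ does \emph{not} deformation retract onto $(R^{k},\partial_{-}R^{k})$, because the remaining collar of $\partial_{+}Z$ retracts onto all of $\partial_{+}Z$, and $\partial_{+}Z$ in general does not deformation retract onto $\mathrm{Im}\,\alpha\approx\partial_{-}R^{k}$ (a sphere component of $\partial_{+}Z$ does not retract onto an embedded $S^{1}\times S^{k-1}\times D^{n-k-1}$, for instance). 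What you actually reach is the pair $(R^{k}\cup\partial_{+}Z,\,\partial_{+}Z)$, and getting from there to $(R^{k},\partial_{-}R^{k})$ needs a further excision of $\partial_{+}Z\setminus\mathrm{Im}\,\alpha$, which itself requires a thickening because the closure-in-interior hypothesis fails along $\partial(\mathrm{Im}\,\alpha)$. Your second formulation rescues this: $(Y,Z)$ is a good pair, $Y/Z$ is literally $R^{k}/\partial_{-}R^{k}$, and both pairs are good, so $H_{*}(Y,Z)\cong\tilde{H}_{*}(Y/Z)\cong H_{*}(R^{k},\partial_{-}R^{k})$; keep that argument and drop (or demote to motivation) the deformation-retraction sentence. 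For comparison, the paper runs the excision in the opposite direction: it first enlarges $Z$ to $Z\cup(\partial_{-}R^{k}\times[0,\epsilon])$ using a collar of $\partial_{-}R^{k}$ \emph{inside} $R^{k}$ (a homotopy equivalence of pairs) and then excises $Z$ itself, landing directly on $(R^{k},\partial_{-}R^{k})$ and avoiding the corner issue you flag. Your K\"unneth computation of $H_{k+1}(S^{1}\times D^{k}, S^{1}\times S^{k-1})\cong\Z$, with the separate $k=0$ case, is correct and is in fact more detailed than the paper, which merely asserts that isomorphism.
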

\begin{proof}
Consider a collar of $\partial_{-}R^{k}$ in $R^{k}$ as $\partial_{-}R^{k}\times [0, \epsilon ]$, where $\partial_{-}R^{k}\approx\partial_{-}R^{k}\times\{ 0\}$ and $\epsilon$ is a small positive number.  Then a pair $(Y, Z\cup(\partial_{-}R^{k}\times [0, \epsilon ]))$ is homotopy equivalent to $(Y, Z)$. Moreover, $Z$ is closed and $\mathrm{Int}(Z\cup(\partial_{-}R^{k}\times [0, \epsilon])) = Z\cup(\partial_{-}R^{k}\times [0, \epsilon ) )$ in $Y$.  Hence by excision, $H_{k+1}(Y, Z)$ is isomorphic to $H_{k+1}(Y\setminus Z, \partial_{-}R^{k}\times (0, \epsilon])\cong H_{k+1}(R^{k}, \partial_{-}R^{k})\cong H_{k+1}(S^{1}\times D^{k}, \partial (S^{1}\times D^{k}))\cong\Z$.  
\end{proof}
\noindent
In view of Proposition \ref{RHD_ChainGrp}, we can consider that symbolically the homology group $H_{k+1}(M^{(k)}, M^{(k-1)})$ is generated by the core circles of the round $k$-handles.  Recall that for a round $k$-handle $(R^{k}, \partial_{-}R^{k}) = (S^{1}\times D^{k}\times D^{n-k-1}, S^{1}\times\partial D^{k}\times D^{n-k-1})$, its {\em core circle} is $S^{1}\times \{ 0\}\times \{ 0\}\subset R^{k}$ and $S^{1}\times D^{k}\times\{ 0\}$ is a real cycle representing a generator of $H_{k+1}(R^{k}, \partial_{-}R^{k})$.  

It may be appropriate that we fix our orientation convention here. Suppose that $M$ is oriented.  Let $c^{k}$ be a periodic orbit of an NMS flow and $(R^{k}, \partial_{-}R^{k})$ an associated round $k$-handle.  Suppose that the splitting $T_{x}c^{k}\oplus E^{\mathrm{uu}}_{x}\oplus E^{\mathrm{ss}}_{x}$ of $T_{x}M$ at every $x\in c^{k}$ is given.  Here, $E^{\mathrm{uu}}_{x}$ (resp. $E^{\mathrm{ss}}_{x}$) is the subspace of eigenvalues greater than $1$ (resp. less than $1$) of the differential of Poincar\'{e} map along $c^{k}$, thus $E^{\mathrm{uu}}$ (resp. $E^{\mathrm{ss}}$)  is a subbundle of $TM|c^{k}$ of rank $k$ (resp. of rank $n-k-1$) and $Tc^{k}\oplus E^{\mathrm{uu}}$ (resp. $Tc^{k}\oplus E^{\mathrm{ss}}$) is the tangent plane field of unstable manifold $W^{u}(c^{k})$ (resp. stable manifold $W^{s}(c^{k})$) along $c^{k}$.  We then choose and fix an orientation $[v_{0}, v_{1},\ldots ,v_{k}]$ of $T_{x}c^{k}\oplus E^{\mathrm{uu}}_{x}$ arbitrarily, where $v_{0}\in T_{x}c^{k}$ is the flow direction and $v_{1},\ldots , v_{k}\in E^{\mathrm{uu}}_{x}$ define any orientation of $E^{\mathrm{uu}}_{x}$.  The orientation of $E^{\mathrm{ss}}_{x}$ is determined automatically by the orientation of $M$.  We extend this orientation to the whole $c^{k}$.  Hence we have a canonical isomorphism $H_{k+1}(R^{k}, \partial_{-}R^{k})\cong\Z$ with this orientation.  

Now suppose that $\partial_{k+1} : H_{k+1}(M^{(k)}, M^{(k-1)})\rightarrow H_{k}(M^{(k-1)}, M^{(k-2)})$ is the connecting homomorphism in the long exact sequence of the triple $(M^{(k)}, M^{(k-1)}, M^{(k-2)})$.  
Set $C^{^\mathrm{RHD}}_{k}(\rho (M)) = H_{k+1}(M^{(k)}, M^{(k-1)})$ and $\partial^{^\mathrm{RHD}}_{k} = \partial_{k+1}$.  
Note that the indices are shifted.  The following is clear.  

\begin{prop}
The graded Abelian groups $\{ C^{^\mathrm{RHD}}_{k}(\rho (M)), \partial^{^\mathrm{RHD}}_{k}\}$ is a chain complex.  
\end{prop}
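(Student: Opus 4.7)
The goal is to verify that $\partial^{\mathrm{RHD}}_{k-1}\circ\partial^{\mathrm{RHD}}_{k}=0$ for every $k$. This is the classical fact that the connecting homomorphisms of the triples in a filtration assemble into a chain complex, exactly as in the verification that the cellular chain complex of a CW-pair is a chain complex; no geometric property of the round handle decomposition enters, so this should be a short formal argument.

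The key tool is the factorization of the connecting map of a triple. For any triple $B\subset A\subset X$, the connecting homomorphism
\[
\delta_{X,A,B}\colon H_{n}(X,A)\longrightarrow H_{n-1}(A,B)
\]
of the long exact sequence of the triple decomposes as
\[
H_{n}(X,A)\xrightarrow{\ \delta_{X,A}\ }H_{n-1}(A)\xrightarrow{\ j_{*}\ }H_{n-1}(A,B),
\]
where $\delta_{X,A}$ is the connecting homomorphism of the pair $(X,A)$ and $j_{*}$ is induced by the inclusion of pairs $(A,\emptyset)\hookrightarrow(A,B)$. This identity is immediate from the standard construction of $\delta_{X,A,B}$ on the chain level.

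Applying this factorization to the two consecutive triples $(M^{(k)},M^{(k-1)},M^{(k-2)})$ and $(M^{(k-1)},M^{(k-2)},M^{(k-3)})$, the composition $\partial^{\mathrm{RHD}}_{k-1}\circ\partial^{\mathrm{RHD}}_{k}$ contains in the middle the two arrows
\[
H_{k}(M^{(k-1)})\xrightarrow{\ j_{*}\ }H_{k}(M^{(k-1)},M^{(k-2)})\xrightarrow{\ \delta\ }H_{k-1}(M^{(k-2)}),
\]
which are consecutive in the long exact sequence of the pair $(M^{(k-1)},M^{(k-2)})$ and therefore compose to zero by exactness. This forces $\partial^{\mathrm{RHD}}_{k-1}\partial^{\mathrm{RHD}}_{k}=0$, as required.

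I expect no real obstacle; the only care needed is bookkeeping at the bottom of the filtration, where one sets $M^{(-2)}=M^{(-3)}=\emptyset$ so that $C^{\mathrm{RHD}}_{k}=0$ for $k<0$ and the compositions at the boundary are trivially zero. The same formal argument shows that had one indexed things differently, one would still obtain a chain complex; the shift of indices in the definition $C^{\mathrm{RHD}}_{k}=H_{k+1}(M^{(k)},M^{(k-1)})$ is purely cosmetic and does not affect the verification.
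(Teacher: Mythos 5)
Your proof is correct: the factorization of the connecting homomorphism of a triple through $H_{k}(M^{(k-1)})$, followed by exactness of the long exact sequence of the pair $(M^{(k-1)},M^{(k-2)})$, is the standard argument here. The paper offers no proof at all (it declares the proposition ``clear''), and your argument is precisely the one it is implicitly relying on.
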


\begin{Thm}
For an RHD $\rho (M)$ of $M$, we have a graded group denoted by $H^{^\mathrm{RHD}}_{\ast}(\rho (M))$, which is defined to be the homology of the chain complex $\{ C^{^\mathrm{RHD}}_{k}(\rho (M)), \partial^{^\mathrm{RHD}}_{k}\}$.  
\end{Thm}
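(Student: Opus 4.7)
The statement reduces to two things: that $\{C^{^\mathrm{RHD}}_k(\rho(M)), \partial^{^\mathrm{RHD}}_k\}$ really is a chain complex, i.e.\ $\partial^{^\mathrm{RHD}}_{k-1}\circ\partial^{^\mathrm{RHD}}_k = 0$; and that one can then form its homology. The latter is by definition, so the entire content is the former --- precisely the assertion the preceding proposition dismisses as ``clear.''

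To justify it, I would invoke the standard factorization of a triple connecting homomorphism. Recall that $\partial^{^\mathrm{RHD}}_k$ equals the connecting map $\partial_{k+1}$ of the triple $(M^{(k)}, M^{(k-1)}, M^{(k-2)})$, which decomposes as
\[
H_{k+1}(M^{(k)}, M^{(k-1)}) \xrightarrow{\;\delta_k\;} H_k(M^{(k-1)}) \xrightarrow{\;j_\ast\;} H_k(M^{(k-1)}, M^{(k-2)}),
\]
where $\delta_k$ is the connecting homomorphism of the pair $(M^{(k)}, M^{(k-1)})$ and $j_\ast$ is induced by inclusion. Substituting this factorization into both occurrences of $\partial^{^\mathrm{RHD}}$ in the composition $\partial^{^\mathrm{RHD}}_{k-1}\circ\partial^{^\mathrm{RHD}}_k$ produces a four-arrow string whose middle two maps, $j_\ast$ followed by $\delta_{k-1}$, are consecutive in the long exact sequence of the pair $(M^{(k-1)}, M^{(k-2)})$; by exactness their composite is zero, and hence the whole composition vanishes.

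Once $\partial^{^\mathrm{RHD}}\circ\partial^{^\mathrm{RHD}} = 0$ is in hand, one sets $H^{^\mathrm{RHD}}_k(\rho(M)) := \ker\partial^{^\mathrm{RHD}}_k/\mathrm{im}\,\partial^{^\mathrm{RHD}}_{k+1}$ in the usual way, and the resulting graded abelian group is the desired object. I do not foresee any real obstacle: the argument is purely formal and mirrors the classical derivation of cellular homology from a CW filtration. The substantive geometry has already been packaged into the preceding material --- the existence of the filtration by indices requires Proposition \ref{NMS2RHD} together with the Assumption that $W(c_1,c_2) = \emptyset$ for equal-index critical elements, while Proposition \ref{RHD_ChainGrp} identifies the chain groups as free abelian of rank $m_k$ --- so the theorem itself is essentially a clean packaging statement.
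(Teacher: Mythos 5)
Your argument is correct and is precisely the standard factorization $\partial_{k+1} = j_\ast\circ\delta_k$ through $H_k(M^{(k-1)})$ that the paper leaves implicit when it declares the chain-complex property ``clear'' and states the theorem without proof. Nothing is missing; this matches the paper's (unwritten) reasoning exactly.
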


\begin{defn}
We call the graded group $H^{^\mathrm{RHD}}_{\ast}(\rho (M))$ a {\em homology} of the RHD $\rho (M)$.  
\end{defn}

Suppose an NMS flow is given, then we have the associated RHD by Proposition \ref{NMS2RHD}.  Hence we have the homology as above.  However, the strict relation between NMS flows and their associated RHDs is not clear.  More precisely, though isotopic RHDs induce isomorphic homology groups, the structures of flow-lines (i.e. oriented one-dimensional foliations) corresponding to the RHDs might change.  Therefore, 
for an NMS-foliation, 
we need a more precise 
description of the boundary operator depending on the foliation structure itself.   

\section{Conley index and boundary operators}\label{ConleyIndex_Bdry}
In this section, we use the notion called {\em Conley index} in order to define another boundary operator.  Suppose a flow $\varphi_{t}$ is given on a closed manifold $M$.  Throughout this section, we write simply $x\cdot t$ (resp. $N\cdot t$) to be $\varphi_{t}(x)$ (resp. $\varphi_{t}(N)$) for $x\in M$ (resp. $N\subset M$) and $t\in\R$.    

\subsection{Index pair and Conley index}
\begin{defn}
For any subset $N\subset M$, we define the {\em maximal saturated set} $I(N)$ of $N$ for $\varphi_{t}$ to be 
\[
I(N) = \{ x\in N\ |\ x\cdot t\in N\ \textrm{for any}\ t\in\R\}. 
\]
Note that $I(N) = \bigcap_{t\in\R}N\cdot t$.  
\end{defn}

\begin{defn}
A compact saturated subset $S\subset M$ is said to be {\em isolated} if there exists a compact neighborhood $N$ of $S$ such that $I(N) = S$.  In the case, $N$ is called an {\em isolating neighborhood} for $S$.  
\end{defn}

\begin{defn}
An {\em index pair} $(N, L)$ for a isolated compact saturated set $S$ is a pair of compact subset $L\subset N$ such that the following conditions are satisfied:
\begin{enumerate}
\item 
$S=I(\mathrm{Cls}(N\setminus L))\subset\mathrm{Int}(N\setminus L)$, where $\mathrm{Cls}$ and $\mathrm{Int}$ denote the closure and the interior respectively.  
\item\label{pos_inv}
If $x\in L$ and $x\cdot [0, t]\subset N$, then $x\cdot [0, t]\subset L$.  
\item\label{exit_set}
If $x\in N\setminus L$, then there is a positive number $t$ such that $x\cdot [0, t]\subset N$.
\end{enumerate}
The condition (\ref{pos_inv}) implies $L$ is positively invariant by the flow and the condition (\ref{exit_set}) implies that each orbit that leaves $N$ must go through $L$ first.  $L$ is called the {\em exit set} of the index pair.  
\end{defn}

Note that a periodic orbit of an NMS flow is a isolated compact saturated set and the associated round handle $(R^{k}, \partial_{-}R^{k})$ is an index pair for the periodic orbit.  

The following results were proven by Conley in \cite{Conley} and by Salamon in \cite{Salamon} with new and simplified definitions and proofs.  

\begin{prop}\label{existence_indexpair}
Every isolated compact saturated set admits an index pair. 
\end{prop}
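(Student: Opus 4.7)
The plan is to construct an index pair $(N, L)$ explicitly from an arbitrary isolating neighborhood of $S$, following the classical strategy of Conley and its simplification by Salamon. Let $K$ be a compact isolating neighborhood of $S$, so that $I(K) = S$ and $S \subset \mathrm{Int}(K)$. The first step is a preparatory shrinking: by continuity of $\varphi_{t}$ and compactness, one can shrink $K$ slightly and extract a uniform time $T > 0$ with the property that any orbit arc of length $T$ starting in the smaller neighborhood which touches the complement of $K$ must in fact leave $K$ entirely during that arc. This uniform escape time is the essential technical input for everything that follows.

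The second step is the construction of a continuous Lyapunov-type function $\ell : K \to [0,1]$ that vanishes on the forward-trapped set $W^{+} = \{ x \in K : x \cdot [0, \infty) \subset K\}$, is positive off $W^{+}$, and is monotone along orbit arcs in $K \setminus W^{+}$. With $\ell$, together with a companion function $\ell^{-}$ built from the backward flow and vanishing on the backward-trapped set, one defines $N$ as a compact neighborhood of $S$ inside $K$ (for instance, a sublevel set of $\ell^{-}$) and sets $L = \{ x \in N : \ell(x) \geq a \}$ for a small constant $a > 0$ chosen below the value of $\ell$ on $\partial K \cap N$. Both $N$ and $L$ are then compact by construction.

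Verifying the three axioms of Definition of an index pair, condition (\ref{pos_inv}) follows from the monotonicity of $\ell$ along orbits inside $K$, which prevents a point with $\ell \geq a$ from returning to $\ell < a$ while remaining in $N$; condition (\ref{exit_set}) is immediate because any orbit leaving $N$ must first cross the cutoff level $\ell = a$ and hence pass through $L$; and condition (1) follows from the chain of inclusions $I(\mathrm{Cls}(N \setminus L)) \subset I(K) = S$ together with $\ell(S) = 0$, which places $S$ in the interior of $N \setminus L$. The main obstacle is the construction of $\ell$ itself: the naive forward-exit-time function $\tau(x) = \sup\{t \geq 0 : x \cdot [0,t] \subset K\}$ is only semi-continuous, so to obtain a function that is simultaneously continuous and strictly monotone along orbits one must average a suitable bump function along orbit arcs of length $T$ and then smooth. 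The uniform escape time from the first step is precisely what makes this average continuous at points of $\partial W^{+}$, and carrying out this construction carefully is the technical core of the argument, for which I would follow Salamon's exposition in detail.
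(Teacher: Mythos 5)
The paper does not prove this proposition at all: it explicitly states that the result is due to Conley \cite{Conley} and Salamon \cite{Salamon} and refers the reader to those sources (and to \cite{Banyaga-Hurtubise}) for the proof. Your proposal is an outline of precisely the construction in those references --- shrink an isolating neighborhood to obtain a uniform escape time, build continuous Lyapunov-type functions $\ell^{\pm}$ vanishing on the forward- and backward-trapped sets $W^{\pm}$ and monotone along orbit arcs, and take $N$ and $L$ to be suitable sublevel/superlevel sets --- so in approach you are doing exactly what the paper delegates to the literature, and the overall strategy is sound. As a standalone proof, however, it is incomplete at the point you yourself flag: the existence of a \emph{continuous} function that is strictly monotone along orbit segments in $K$ is the whole content of the argument (the naive exit-time function is only semicontinuous, as you note), and you defer its construction entirely to Salamon. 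Two smaller points deserve care if you were to write this out in full: condition (3) of the definition is formally the statement that every point of $N\setminus L$ has positive forward residence time in $N$, so you must check that the entire exit portion of $\partial N$ lies in $\{\ell\geq a\}$, not merely that exiting orbits eventually cross the level $a$; and the compactness of $L$ requires $\ell$ to be continuous on all of $N$, including on $\partial W^{+}$, which is exactly where the uniform escape time is needed. Since the paper supplies no proof, your sketch cannot conflict with it; it is a faithful summary of the standard argument, but not yet a proof.
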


\begin{prop}\label{homotopy_inv_indexpair}
If $S$ is an isolated compact saturated set and $(N, L)$ and $(N', L')$ are two index pairs for $S$, then $N/L$ and $N'/L'$ are homotopy equivalent as pointed spaces via maps that are induced by the flow.  
\end{prop}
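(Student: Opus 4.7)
The plan is to follow Salamon's strategy of constructing homotopy equivalences $N/L \to N'/L'$ and $N'/L' \to N/L$ directly from the flow, and then verifying that the two compositions are homotopic to the identity through flow-defined homotopies. The key observation is that since $S = I(\mathrm{Cls}(N\setminus L)) = I(\mathrm{Cls}(N'\setminus L'))$ is the \emph{common} maximal saturated set, any orbit that does not lie in $S$ eventually leaves both $N\setminus L$ and $N'\setminus L'$, so large-time flow maps collapse ``non-essential'' points to the basepoint in a controlled way.

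First I would do a reduction to a convenient situation: replace $(N,L)$ and $(N',L')$ by a pair that fits inside both, for instance by intersecting them with a small compact isolating neighborhood $U$ of $S$ and adjusting the exit sets accordingly. One shows that passing to such an intermediate index pair induces a homotopy equivalence on the quotient (Salamon proves this in the form of a ``replacement'' lemma built out of the positive invariance condition and the exit-set condition). Thus it suffices to compare $N/L$ with its restriction to $U$, which can be done symmetrically for both index pairs. This reduces the problem to the case $(N', L') \subseteq (N, L)$ in a strong sense.

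Next, for $T>0$ sufficiently large I would define the flow map
\[
f_T : N/L \longrightarrow N'/L', \qquad f_T([x]) = \begin{cases} [x\cdot T] & \text{if } x\cdot [0,T]\subset N \text{ and } x\cdot T\in N'\setminus L',\\ [\ast] & \text{otherwise,}\end{cases}
\]
and the analogous map $g_T$ in the opposite direction using a different large time. Well-definedness uses the positive invariance condition \eqref{pos_inv} to see that points equivalent to the basepoint stay equivalent to the basepoint, and uses condition \eqref{exit_set} together with compactness to bound the time any non-$S$ orbit spends inside $N\setminus L$. Continuity at the basepoint is the delicate point: I would show that the set of $x$ for which $x\cdot[0,T]\subset N\setminus L$ and $x\cdot T\in \mathrm{Cls}(N'\setminus L')$ is closed, so its complement maps to a neighborhood of $\ast$.

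Finally, to see that $g_T\circ f_T\simeq \mathrm{id}$ and $f_T\circ g_T\simeq \mathrm{id}$ as pointed maps, I would use the flow itself to build the homotopy: the family $h_s([x])=[x\cdot s]$ (suitably interpreted, collapsing to the basepoint when the orbit escapes) interpolates between the identity (at $s=0$) and the composition (at $s=T+T'$). The main obstacle, and the reason Salamon's simplification is needed, is precisely this continuity-at-the-basepoint analysis together with the verification that the homotopies $h_s$ respect the quotient: this forces one to carefully use both the positive invariance of the exit set and the fact that $S$ is isolated, to ensure that escape times vary upper-semicontinuously and that no ``new'' orbits sneak back into $N\setminus L$ along the homotopy.
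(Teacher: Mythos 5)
The paper does not actually prove this proposition: it is quoted from Conley and Salamon, and the text explicitly refers the reader to \cite{Conley}, \cite{Salamon} and \cite{Banyaga-Hurtubise} for the proof. Your sketch reproduces the architecture of Salamon's argument (reduction to conveniently nested index pairs, flow-defined maps between the pointed quotients, flow-defined homotopies between the compositions and the identities), so at the level of strategy you are following exactly the source the paper relies on.

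However, the map $f_T$ as you have written it has two concrete defects. First, well-definedness on the quotient fails: if $x\in L$ and $x\cdot[0,T]\subset N$, condition (\ref{pos_inv}) gives $x\cdot[0,T]\subset L$, but nothing prevents $x\cdot T$ from lying in $N'\setminus L'$ (the sets $L$ and $N'\setminus L'$ may well intersect), so your formula can send a representative of the basepoint class to a non-basepoint; the hypothesis must be $x\cdot[0,T]\subset N\setminus L$, not $x\cdot[0,T]\subset N$, and then positive invariance does the job you claim for it. Second, the single-time condition $x\cdot T\in N'\setminus L'$ does not give continuity away from the basepoint: if $x\cdot T$ lies on the frontier of $N'\setminus L'$ but far from $L'$, nearby orbits can land just outside $N'\setminus L'$ and get collapsed to $\ast$, producing a jump. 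Salamon's construction avoids both problems by demanding that a whole orbit segment (say $x\cdot[T,2T]$) lie in $N'\setminus L'$ and, crucially, by a ``squeeze'' lemma: for $T$ sufficiently large, every $x$ with $x\cdot[-T,T]\subset\mathrm{Cls}(N\setminus L)$ lies in $\mathrm{Int}(N'\setminus L')$. That lemma is where the hypothesis that $N$ and $N'$ isolate the \emph{same} set $S$ enters quantitatively; your sketch gestures at it (``bound the time any non-$S$ orbit spends inside $N\setminus L$'') but never states it, and without it the continuity verification you defer to the end cannot be completed.
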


\begin{defn}
The {\em Conley index} of an isolated compact saturated set $S$ is the homotopy type of $N/L$ where $(N, L)$ is an index pair for $S$.  
\end{defn}

We say that an index pair $(N, L)$ is {\em regular} if the inclusion map $L\hookrightarrow N$ is a cofibration.  Any index pair $(N, L)$ can be modified into a regular index pair $(N, \tilde{L})$ (see Section 5.1 of \cite{Salamon}).  Suppose that $S$ is an isolated compact saturated set and $(N, L)$ is a regular index pair for $S$.  Since for any cofibration $B\hookrightarrow Y$ with a closed subset $B$, the quotient map $(Y, B)\rightarrow (Y/B, \ast)$ induces isomorphisms between homology (resp. cohomology) groups of $(Y, B)$ and $(Y/B, \ast)$,  $H_{\ast}(N, L)$ (resp. $H^{\ast}(N, L)$) are topological invariant for the isolated compact saturated set $S$.  We refer the reader to \cite{Conley}, \cite{Salamon} and \cite{Banyaga-Hurtubise} for the proofs of Propositions \ref{existence_indexpair} and \ref{homotopy_inv_indexpair}

\subsection{The explicit construction of index pairs for the boundary operator}\label{ExplctCnstrctn}

Recall that $M$ is a closed oriented $n$-manifold and that $\varphi_{t}$ is an NMS-flow on $M$ whose associated RHD is simple.  Suppose that $c^{k}$ is a periodic orbit of $\varphi_{t}$ of index $k\ (1\leq k\leq n-1)$ and let $\mathcal{C}(c^{k})$ denote the set of all periodic orbits $c^{k-1}$ of index $k-1$ such that $W(c^{k}, c^{k-1})\neq\emptyset$.  Moreover, let $\mathcal{A}(c^{k})$ be the set of all connected components of $\cup\{ W(c^{k}, c^{k-1})\ |\ c^{k-1}\in\mathcal{C}(c^{k})\}$.  
By the transversality of $W^{u}(c^{k})$ and $W^{s}(c^{k-1})$, $W(c^{k}, c^{k-1})=W^{u}(c^{k})\cap W^{s}(c^{k-1})$ is a $2$-dimensional submanifold.  Note that $W^{u}(c^{k})$ and $W^{s}(c^{k-1})$ are submanifolds and for any point $x\in W(c^{k}, c^{k-1})$ its $\alpha$-limit set $\alpha (x)$ and $\omega$-limit set $\omega (x)$ are equal to $c^{k}$ and $c^{k-1}$ respectively.  Thus every element $A$ of $\mathcal{A}(c^{k})$ is an open saturated annulus and if $A\subset W(c^{k}, c^{k-1})$ then $c^{k}\cup A\cup c^{k-1}$ is a compact saturated set.  We call $A$ a {\em connecting annulus}.  

First, for the clarity, we consider a single connecting annulus $A\in\mathcal{A}(c^{k})$, with $A\subset W(c^{k}, c^{k-1})$.  
Let $R^{k}$ and $R^{k-1}$ be the associated round handles with $c^{k}$ and $c^{k-1}$ respectively and we fix a framing $F: R^{k}\approx S^{1}\times D^{k}\times D^{n-k-1}$.  We define a ``round subhandle'' $Q^{k}\subset R^{k}$ under this framing $F$ as $Q^{k}\approx S^{1}\times D^{k}(r)\times D^{n-k-1}(r)\ (0<r<1)$, where $D^{m}(r)$ denotes the $m$-dimensional disk of radius $r$.  Let $\partial_{-}Q^{k}$ (resp. $\partial_{+}Q^{k}$) denote $S^{1}\times\partial D^{k}(r)\times D^{n-k-1}(r)$\quad (resp. $S^{1}\times D^{k}(r)\times\partial D^{n-k-1}(r)$) under $F$.  Set $a = A\cap\partial_{-}Q^{k}$.  Then, $a\subset W^{u}(c^{k})\cap\partial_{-}Q^{k}\approx S^{1}\times\partial D^{k}(r)\times\{ 0\}$.  
Then we have a tubular neighborhood $N(a)$ of $a$ in $W^{u}(c^{k})\cap\partial_{-}Q^{k}$ such that $N(a)\subset\partial_{+}R^{k-1}\cdot\R_{\leq 0}$, i.e., $N(a)\cdot\R_{\geq 0}$ pass through $\partial_{+}R^{k-1}$.  Note that in the case $k = 1$, since  $W^{u}(c^{k})\cap\partial_{-}Q^{k} \approx S^{1}\times\{\pm r\}\times\{ 0\}$ and $a$ is a submanifold, $a$ is equal to $S^{1}\times\{ r\}\times\{ 0\}$ or $S^{1}\times\{ -r\}\times\{ 0\}$.  Thus, in this case we set $N(a)=a$.  In any case, $N(a)$ is diffeomorphic to $S^{1}\times D^{k-1}$.  

\begin{claim}\label{a_isolated}
We can choose $N(a)$ so that $N(a)\cap W(c^{k}, c^{k-1}) = a$.  
\end{claim}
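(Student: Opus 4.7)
The plan is to show that $a$ is an isolated component of the $1$-dimensional submanifold $W(c^k,c^{k-1})\cap\partial_-Q^k$, so that any sufficiently thin tubular neighborhood $N(a)$ inside $W^u(c^k)\cap\partial_-Q^k$ avoids the other components. Concretely, I will construct an open $M$-neighborhood $U$ of $a$ with $W(c^k,c^{k-1})\cap U\subseteq A$ and then shrink $N(a)$ to lie in $U$; the claim follows since
\[
N(a)\cap W(c^k,c^{k-1})\subseteq U\cap\partial_-Q^k\cap A=a,
\]
and the reverse inclusion is immediate. The case $k=1$, where the paper sets $N(a)=a$, is trivial.

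The construction of $U$ is local. For each $x\in a$ I produce a small $M$-neighborhood $V_x$ in which $W^u(c^k)\cap V_x$ and $W^s(c^{k-1})\cap V_x$ are single smoothly embedded sheets of dimensions $k+1$ and $n-k+1$ respectively. Granted this, their transverse intersection, after possibly shrinking $V_x$, is a single smoothly embedded $2$-disk through $x$ (by the implicit function theorem in coordinates adapted to the transversality of $W^u(c^k)\pitchfork W^s(c^{k-1})$). Any other connecting annulus $A'$ meeting $V_x$ would then lie in this $2$-disk and hence coincide with the component $A$, so $W(c^k,c^{k-1})\cap V_x\subseteq A$. By compactness of $a$ I cover it by finitely many such $V_{x_i}$ and set $U=\bigcup_iV_{x_i}$.

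To obtain the single-sheet property I push $x$ forward to $\partial_+R^{k-1}$. The orbit of $x$ meets $\partial_+R^{k-1}$ transversally at a point $y$ lying on $b=\Psi(a)$, where $\Psi$ denotes the forward flow section map to $\partial_+R^{k-1}$. Inside $R^{k-1}$ the round-handle model exhibits $W^s(c^{k-1})\cap R^{k-1}$ as the embedded submanifold $S^1\times\{0\}\times D^{n-k}$, so $W^s(c^{k-1})$ is a single embedded sheet in a small neighborhood of $y$ in $M$; pulling this back through the local flow diffeomorphism gives the same single-sheet property in some $M$-neighborhood $V_x$ of $x$. A parallel argument using the embedded local unstable manifold $S^1\times D^k\times\{0\}\subseteq R^k$ handles $W^u(c^k)$ in $V_x$.

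The main obstacle is precisely this single-sheet verification: an injectively immersed submanifold may a priori accumulate on itself in $M$, and one must use the Morse-Smale structure to rule this out. The flow-based reduction to the round-handle neighborhoods of $c^{k-1}$ and $c^k$ is clean because there the invariant manifolds are embedded by construction, while the absence of recurrence in an NMS flow prevents pathological accumulation elsewhere along the orbit from $x$ to $y$. Some care is needed in choosing $V_x$ small enough that $\Psi$ is defined on the whole of $V_x$ and maps into the coordinate chart of $R^{k-1}$ in which the local model for $W^s(c^{k-1})$ is valid.
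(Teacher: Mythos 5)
Your argument is correct, but it takes a genuinely different route from the paper's. The paper argues globally: since $W(c^{k},c^{k-1})$ is a saturated submanifold whose flow-lines are transverse to $\partial_{-}Q^{k}$, the intersection $W(c^{k},c^{k-1})\cap\partial_{-}Q^{k}$ is a \emph{compact} one-dimensional submanifold of $\partial_{-}Q^{k}$; it therefore has finitely many components, $a$ is one of them, the union of the others is compact and disjoint from $a$, and any sufficiently thin tubular neighborhood of $a$ avoids them. You instead argue locally, producing around each point of $a$ a neighborhood in which $W^{u}(c^{k})$ and $W^{s}(c^{k-1})$ are single embedded sheets, so that $W(c^{k},c^{k-1})$ meets that neighborhood only in the component $A$, and then you use compactness of $a$ alone. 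What your approach buys is exactly the point the paper leaves implicit: closedness of $W(c^{k},c^{k-1})\cap\partial_{-}Q^{k}$ in $\partial_{-}Q^{k}$ is not automatic (invariant manifolds are in general only injectively immersed and can accumulate on themselves); it holds here because $\mathrm{Cls}(W(c^{k},c^{k-1}))\setminus W(c^{k},c^{k-1})\subset c^{k}\cup c^{k-1}$, which misses $\partial_{-}Q^{k}$, and your single-sheet analysis is in effect a direct proof of this non-accumulation near $a$. One small imprecision on your side: the identity $W^{s}(c^{k-1})\cap R^{k-1}=S^{1}\times\{0\}\times D^{n-k}$ is not purely a feature of the round-handle model; you also need that an orbit leaving $R^{k-1}$ through $\partial_{-}R^{k-1}$ enters the positively invariant set $M^{(k-2)}$ and never returns, so the global stable manifold cannot re-enter the handle off the local sheet --- that, rather than ``accumulation along the orbit from $x$ to $y$,'' is where the filtration structure must be invoked. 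With that adjustment your proof is complete.
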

\begin{proof}
Recall that $W^{u}(c^{k})$ and $W^{s}(c^{k-1})$ are saturated submanifolds and transverse to each other.  Therefore $W(c^{k}, c^{k-1}) = W^{u}(c^{k})\cap W^{s}(c^{k-1})$ is also saturated submanifold.  Since any flow-line is transverse to $\partial_{-}Q^{k}$ (more precisely, to $\partial_{-}Q^{k}-\partial_{+}Q^{k}$), $W(c^{k}, c^{k-1})\cap\partial_{-}Q^{k}$ is a compact $1$-dimensional submanifold in $\partial_{-}Q^{k}$ and $a$ is its one of connected components.  Thus, if we choose a tubular neighborhood of $a$ small enough, the claim follows. 
\end{proof}
\begin{figure}[htbp]
 	\centering
	\includegraphics[height=7cm]{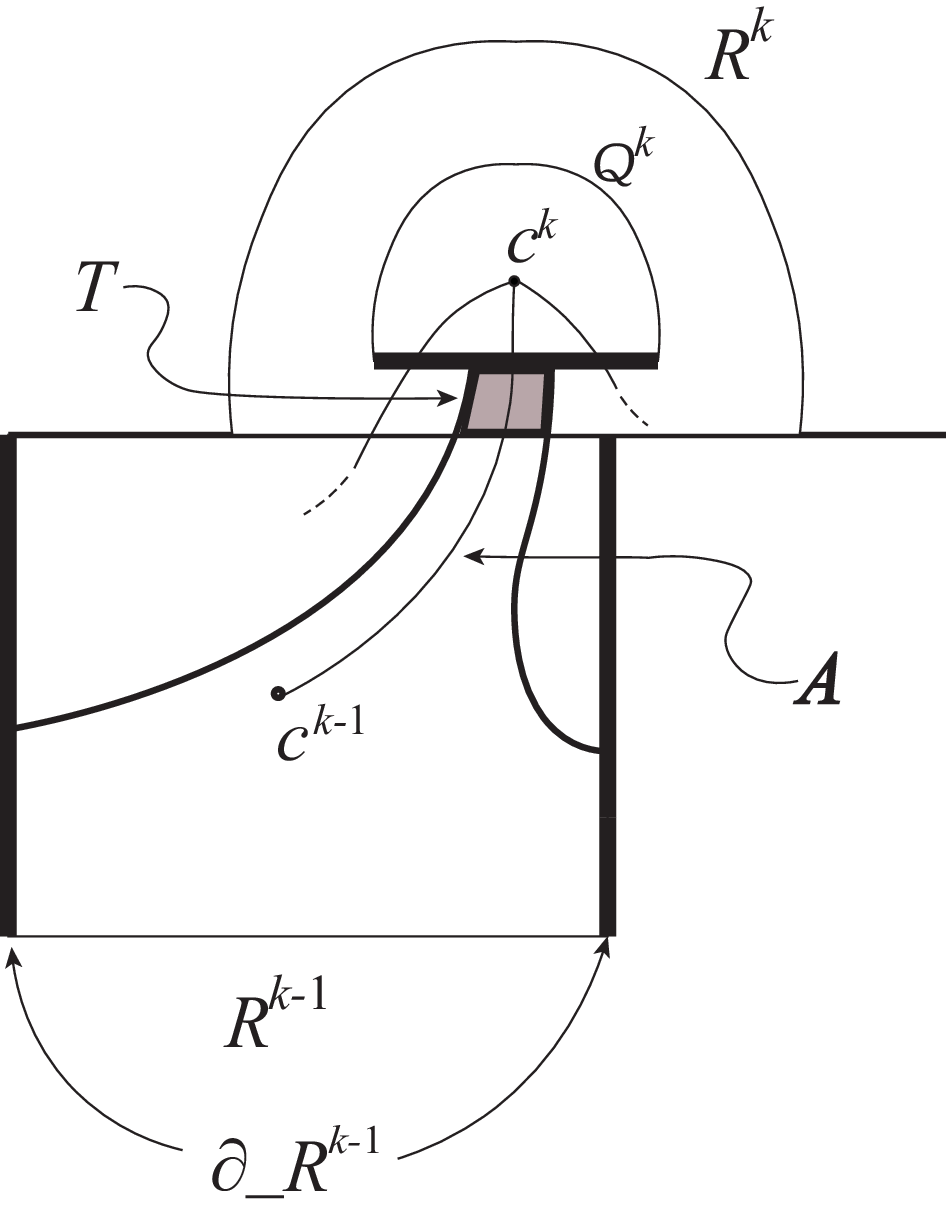}
	\caption{A round subhandle and connecting annuli}
	\label{fig_conn_annulus}
\end{figure}
\begin{figure}
        \centering 
        \includegraphics[height=8cm]{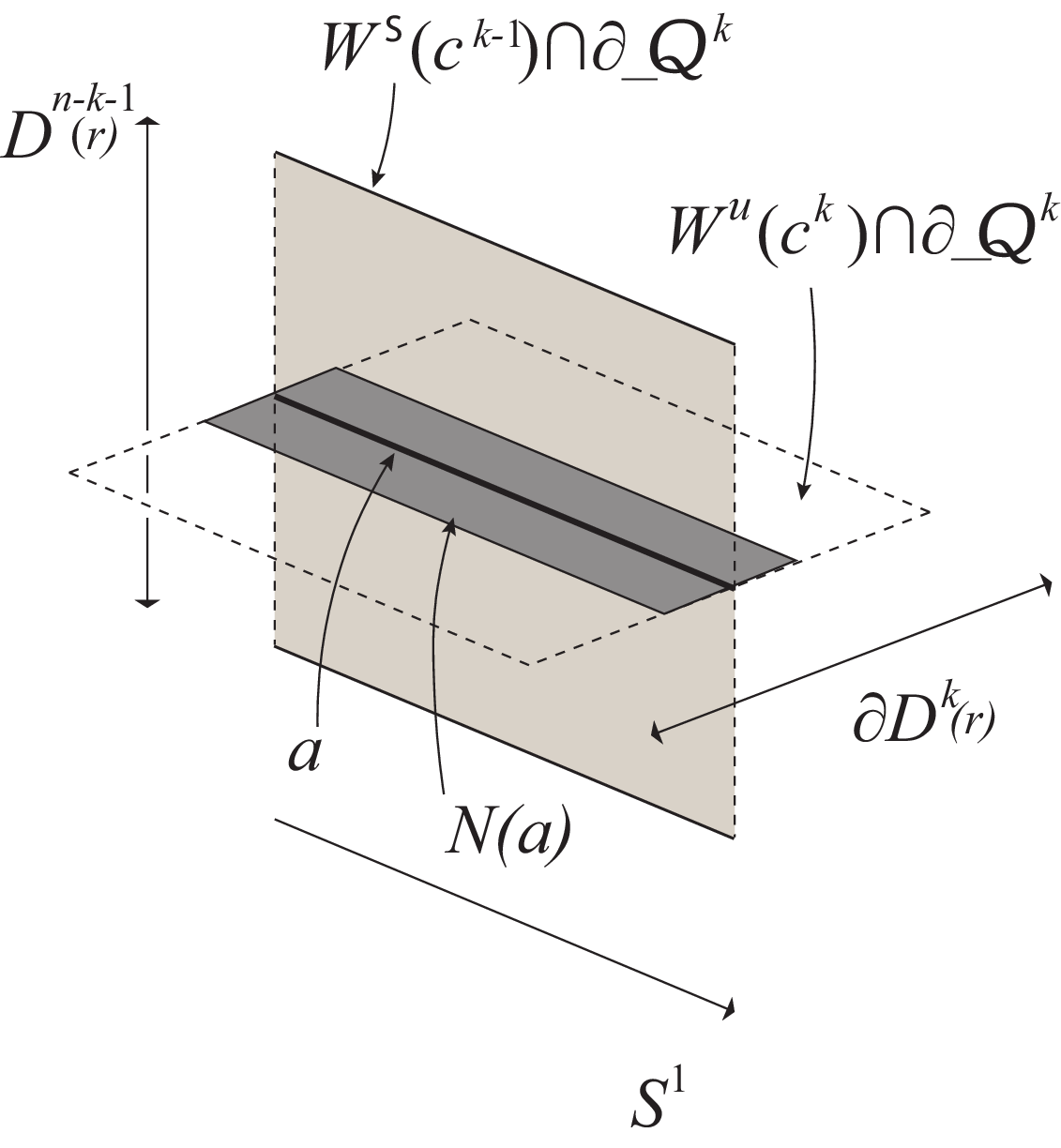}
	\caption{A local view of $\partial_{-}Q^{k}$}
	\label{fig_del_Q^k}
\end{figure}

Recall we have $N(a)\subset W^{u}(c^{k})\cap\partial_{-}Q^{k}$, then by the framing $F$ of $Q^{k}$ we set $\tau = N(a)\times D^{n-k-1}(r)\subset S^{1}\times\partial D^{k}(r)\times D^{n-k-1}(r)$.  Since $N(a)\subset\partial_{+}R^{k-1}\cdot\R_{\leq 0}$, we have $\tau\subset\partial_{+}R^{k-1}\cdot\R_{\leq 0}$ by choosing a smaller $r$ if necessary.  
Then we have $\mathrm{Int}_{\partial_{-}Q^{k}}(\tau) = (\mathrm{Int}_{W^{u}(c^{k})\cap\partial_{-}Q^{k}}(N(a)))\times D^{n-k-1}(r)$, where $\mathrm{Int}_{Y}(B)$ denotes the interior of the subset $B$ in $Y$.  We write $\partial N(a) = N(a)\setminus\mathrm{Int}_{W^{u}(c^{k})\cap\partial_{-}Q^{k}}(N(a))$ and set $\partial_{0}\tau = \partial N(a)\times D^{n-k-1}(r)$.  Note that $\partial_{0}\tau = \tau\setminus\mathrm{Int}_{\partial_{-}Q^{k}}(\tau)$.  Thus $\tau$ is a tubular neighborhood of $a$ in $\partial_{-}Q^{k}$ and $\partial_{0}\tau$ is its ``half boundary'' without $N(a)\times\partial D^{n-k-1}(r)$.  Note that $N(a)\times\partial D^{n-k-1}(r)$ intersects $W^{s}(c^{k-1})$.  Then by the definition of $\tau$ and Claim \ref{a_isolated}, we have the following claim:

\begin{claim}\label{delzerotau_flowthrough}
$\partial_{0}\tau\subset\partial_{-}R^{k-1}\cdot\R_{\leq 0}$, i.e., $\partial_{0}\tau\cap W^{s}(c^{k-1}) = \emptyset$.  
\end{claim}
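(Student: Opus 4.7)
The plan is to prove the equivalent second formulation $\partial_{0}\tau\cap W^{s}(c^{k-1})=\emptyset$, by bootstrapping Claim \ref{a_isolated} from $N(a)$ up to the normal thickening $\tau = N(a)\times D^{n-k-1}(r)$.

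As a first observation, since $N(a)\subset W^{u}(c^{k})\cap\partial_{-}Q^{k}$, Claim \ref{a_isolated} yields
\[
N(a)\cap W^{s}(c^{k-1}) \;=\; N(a)\cap\bigl(W^{u}(c^{k})\cap W^{s}(c^{k-1})\bigr) \;=\; N(a)\cap W(c^{k}, c^{k-1}) \;=\; a.
\]
Since $a$ lies in the interior of the tubular neighborhood $N(a)$, this gives $\partial N(a)\cap W^{s}(c^{k-1})=\emptyset$, which is the ``$s=0$'' version of what we want. To promote it to the full $s$-thickening I would straighten coordinates near the connecting annulus: the transversality of $W^{u}(c^{k})$ and $W^{s}(c^{k-1})$ along a compact sub-annulus $\hat{A}\subset A$, chosen so that $\hat{A}\cap\partial_{-}Q^{k} = a$, furnishes a tubular-neighborhood chart of $\hat{A}$ in $M$ of the form $\hat{A}\times D^{k-1}\times D^{n-k-1}$ in which $W^{u}(c^{k})$ is modeled by $\hat{A}\times D^{k-1}\times\{0\}$ and $W^{s}(c^{k-1})$ by $\hat{A}\times\{0\}\times D^{n-k-1}$. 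Choosing the framing $F$ and the radii $r'$ (of $N(a)\approx S^{1}\times D^{k-1}(r')$) and $r$ sufficiently small and coherently with this chart, $\tau$ sits inside it as $a\times D^{k-1}(r')\times D^{n-k-1}(r)$, and $W^{s}(c^{k-1})\cap\tau = a\times\{0\}\times D^{n-k-1}(r)$, which has vanishing $D^{k-1}$-coordinate and is therefore disjoint from $\partial_{0}\tau = \partial N(a)\times D^{n-k-1}(r)$.

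The main obstacle is ensuring that the already-constructed $\tau$ actually sits inside the straightened chart of $\hat{A}$. I would handle this either by choosing $F$ coherently with the straightening of $A$ from the outset, or by a purely local transversality argument: since $W^{s}(c^{k-1})\cap\partial_{-}Q^{k}$ is transverse in $\partial_{-}Q^{k}$ to $W^{u}(c^{k})\cap\partial_{-}Q^{k}$ along $a$, it is locally a graph $v = \phi(t, s)$ with $\phi(t, 0) = 0$, and uniform continuity on the compact domain $S^{1}\times D^{n-k-1}(r)$ keeps this graph inside $\mathrm{Int}\,N(a)\times D^{n-k-1}(r)$ for small $r$; any other components of $W^{s}(c^{k-1})\cap\partial_{-}Q^{k}$ are bounded away from $\overline{N(a)}$ (by Claim \ref{a_isolated}, after possibly shrinking $r'$) and so excluded from $\tau$ by shrinking $r$ further. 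Finally, the equivalent statement $\partial_{0}\tau\subset\partial_{-}R^{k-1}\cdot\R_{\leq 0}$ follows from the disjointness with $W^{s}(c^{k-1})$, because for small $r$ all of $\tau$ flows forward into $R^{k-1}$ through $\partial_{+}R^{k-1}$ (inheriting this from $N(a)\subset\partial_{+}R^{k-1}\cdot\R_{\leq 0}$), and inside $R^{k-1}$ any point off $W^{s}(c^{k-1})$ must exit through $\partial_{-}R^{k-1}$.
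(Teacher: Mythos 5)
Your overall strategy matches what the paper intends: the paper offers no written proof of Claim \ref{delzerotau_flowthrough}, asserting only that it follows ``by the definition of $\tau$ and Claim \ref{a_isolated}'', and your argument --- reduce to $N(a)\cap W^{s}(c^{k-1})=a$, note $a\subset\mathrm{Int}\,N(a)$, and then control the normal thickening by transversality along $A$ for small $r$ --- is exactly the fleshed-out version of that. Your final reduction of the formulation $\partial_{0}\tau\subset\partial_{-}R^{k-1}\cdot\R_{\leq 0}$ to disjointness from $W^{s}(c^{k-1})$ is also right, since $\tau\subset\partial_{+}R^{k-1}\cdot\R_{\leq 0}$ was arranged before the claim.

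One step is under-justified: you dismiss the ``other components of $W^{s}(c^{k-1})\cap\partial_{-}Q^{k}$'' as ``bounded away from $\overline{N(a)}$ by Claim \ref{a_isolated}''. But Claim \ref{a_isolated} only controls $W(c^{k},c^{k-1})=W^{u}(c^{k})\cap W^{s}(c^{k-1})$, i.e.\ the trace of $W^{s}(c^{k-1})$ on the zero-section $N(a)\times\{0\}$; it says nothing about sheets of $W^{s}(c^{k-1})$ that approach $\partial N(a)$ through points with nonzero $D^{n-k-1}(r)$-coordinate and never meet $W^{u}(c^{k})$. Since $W^{s}(c^{k-1})$ is in general not closed in $M$ (its closure can contain stable manifolds of higher-index orbits), ``disjoint from $N(a)$'' does not by itself yield ``bounded away''. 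The gap closes once you observe that $W^{s}(c^{k-1})\cap\tau$ is closed in the compact set $\tau$: because $\tau\subset\partial_{+}R^{k-1}\cdot\R_{\leq 0}$, a point of $\tau$ lies in $W^{s}(c^{k-1})$ if and only if its first-entry point on $\partial_{+}R^{k-1}$ lies in the closed set $W^{s}(c^{k-1})\cap\partial_{+}R^{k-1}$, and the first-entry map is continuous. Hence if $\partial_{0}\tau$ met $W^{s}(c^{k-1})$ for every small normal radius, a limit point as the radius tends to $0$ would lie in $\partial N(a)\cap W^{s}(c^{k-1})$, contradicting your first observation. With that repair your proof is complete and consistent with the construction in the paper.
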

\noindent
We set $T = (\tau\cdot\R_{\geq 0})\cap (\partial_{+}R^{k-1}\cdot\R_{\leq 0})$.  Note that $T$ is a relative tubular neighborhood of $A-(Q^{k}\cup R^{k-1})$ and diffeomorphic to $S^{1}\times D^{k-1}\times D^{n-k-1}\times [0, 1]$ so that $A\cap T$ equals to $S^{1}\times\{ 0\}\times\{ 0\}\times [0, 1]$ whose last factor $[0, 1]$ corresponds to the flow-line.  

Now we will define compact subsets $N_{0}\subset N_{1}\subset N_{2}$ so that $(N_{2}, N_{1})$ is an index pair for $c^{k}$, $(N_{1}, N_{0})$ is an index pair for $c^{k-1}$ and $(N_{2}, N_{0})$ is an index pair for $c^{k}\cup A\cup c^{k-1}$.  We define 
\[
\begin{array}[t]{ccl}
N_{2} & = & Q^{k}\cup T\cup R^{k-1},\\
N_{1} & = & \partial_{-}Q^{k}\cup T\cup R^{k-1},\\
N_{0} & = & (\partial_{-}Q^{k}\setminus\mathrm{Int}_{\partial_{-}Q^{k}}(\tau))\cup((\partial_{0}\tau\cdot\R_{\geq 0})\cap(T\cup R^{k-1}))\cup\partial_{-}R^{k-1}.
\end{array}
\]
\begin{claim}\label{index_pair_21}
$(N_{2}, N_{1})$ is an index pair for $c^{k}$. 
\end{claim}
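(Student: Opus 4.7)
The plan is to verify the three index-pair conditions directly, leveraging that $(Q^{k}, \partial_{-}Q^{k})$ is already an index pair for $c^{k}$ and that $T$ and $R^{k-1}$ interact with the flow compatibly. The key preliminary observation is that
\[
N_{2} \setminus N_{1} = Q^{k} \setminus \partial_{-}Q^{k}.
\]
Indeed $T$ and $R^{k-1}$ appear in both $N_{1}$ and $N_{2}$ and so cancel, while $T \cap Q^{k} = \tau \subset \partial_{-}Q^{k}$ since the flow exits $Q^{k}$ through $\tau$ and $T$ is constructed by flowing forward from $\tau$. From this one gets $\mathrm{Cls}(N_{2} \setminus N_{1}) = Q^{k}$ and $\mathrm{Int}(N_{2} \setminus N_{1}) \supset \mathrm{Int}_{M}(Q^{k}) \ni c^{k}$, so condition~(1) reduces to the standard fact that the maximal saturated set inside the round handle neighborhood $Q^{k}$ is its core circle $c^{k}$.

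For condition~(2) I would run a case analysis on where $x \in N_{1}$ lies, tracking the forward orbit under the hypothesis $x \cdot [0, t] \subset N_{2}$. If $x \in \partial_{-}Q^{k} \setminus \tau$, outward transversality of the flow along $\partial_{-}Q^{k}$ together with Claim~\ref{a_isolated} force the orbit to leave $N_{2}$ immediately, rendering the implication vacuous. If $x \in \tau$, the very definition $T = \tau \cdot \R_{\geq 0} \cap \partial_{+}R^{k-1} \cdot \R_{\leq 0}$ shows that the forward orbit traverses $T$ and then crosses $\partial_{+}R^{k-1}$ into $R^{k-1}$, so it stays inside $T \cup R^{k-1} \subset N_{1}$. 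The case $x \in T$ follows by shifting the starting time. Finally if $x \in R^{k-1}$, the flow remains in $R^{k-1} \subset N_{1}$ until it exits via $\partial_{-}R^{k-1}$, beyond which it leaves $N_{2}$; so the hypothesis restricts the relevant orbit segment to $R^{k-1}$.

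Condition~(3), as stated in the excerpt, is nearly immediate from the shape of $N_{2} \setminus N_{1} = Q^{k} \setminus \partial_{-}Q^{k}$: such an $x$ either lies in the topological interior of $Q^{k}$ (so a short time keeps it in $Q^{k}$ by continuity) or on $\partial_{+}Q^{k}$, the inward-pointing boundary, where the flow immediately pushes $x$ into the interior of $Q^{k}$; in either case $x \cdot [0, t] \subset Q^{k} \subset N_{2}$ for some $t > 0$. The main obstacle I anticipate is the subcase $x \in \partial_{-}Q^{k} \setminus \tau$ of condition~(2): one must be sure that such an orbit truly exits $N_{2}$ and cannot re-enter through $T \cup R^{k-1}$. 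This rests on the geometric content of Section~\ref{ExplctCnstrctn}, particularly Claim~\ref{delzerotau_flowthrough} and the fact that $T$ is a relative tubular neighborhood of $A - (Q^{k} \cup R^{k-1})$ attached to $Q^{k}$ precisely along $\tau$, so that orbits starting on $\partial_{-}Q^{k}$ outside $\tau$ have no way back into $T$.
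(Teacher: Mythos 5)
Your proposal is correct and follows essentially the same route as the paper: both hinge on the identity $N_{2}\setminus N_{1}=Q^{k}\setminus\partial_{-}Q^{k}$ to reduce condition (1) to $I(Q^{k})=c^{k}$, verify condition (2) by splitting $N_{1}$ into the part $(\partial_{-}Q^{k}\setminus\tau)\cup\partial_{-}R^{k-1}$ whose forward orbits leave $N_{2}$ immediately and the part $T\cup(R^{k-1}\setminus\partial_{-}R^{k-1})$ which is positively invariant within $N_{2}$, and dispose of condition (3) by noting that forward orbits from $Q^{k}\setminus\partial_{-}Q^{k}$ stay in $Q^{k}$ for a short time. Your extra attention to why orbits leaving $\partial_{-}Q^{k}\setminus\tau$ cannot re-enter through $T\cup R^{k-1}$ (via $T\cap\partial_{-}Q^{k}=\tau$ and the construction of $T$) is a point the paper leaves implicit, but it is the same argument.
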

\begin{proof}
We have $N_{2}\setminus N_{1} = Q^{k}\setminus\partial_{-}Q^{k}$.  Thus $\mathrm{Cls}(N_{2}\setminus N_{1}) = Q^{k}$ and $\mathrm{Int}(N_{2}\setminus N_{1}) = \mathrm{Int}(Q^{k})$.  Therefore we have $I(\mathrm{Cls}(N_{2}\setminus N_{1})) = I(Q^{k}) = c^{k} \subset\mathrm{Int}(Q^{k}) = \mathrm{Int}(N_{2}\setminus N_{1})$. 

Next, we can rewrite $N_{1}$ as $((\partial_{-}Q^{k}\setminus\tau)\cup \partial_{-}R^{k-1})\sqcup (T\cup (R^{k-1}\setminus\partial_{-}R^{k-1}))$.  If $x\in (\partial_{-}Q^{k}\setminus\tau)\cup\partial_{-}R^{k-1}$, then there is no positive number $t$ such that $x\cdot [0, t]\subset N_{2}$.  If $x\in T\cup (R^{k-1}\setminus\partial_{-}R^{k-1})$, then there is a positive number $t$ such that $x\cdot [0, t]\subset N_{1}$.  

As for (3), if $x\in N_{2}\setminus N_{1} = Q^{k}\setminus\partial_{-}Q^{k}$, then there is a positive number $t$ such that $x\cdot [0, t]\subset Q^{k}\subset N_{2}$.  
\end{proof}
\begin{claim}\label{index_pair_10}
$(N_{1}, N_{0})$ is an index pair for $c^{k-1}$. 
\end{claim}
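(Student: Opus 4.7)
The plan is to verify the three defining conditions of an index pair for $c^{k-1}$, in parallel to the just-given proof of Claim \ref{index_pair_21}. The isolated saturated set is now the lower periodic orbit $c^{k-1}$, and morally $N_{1}$ is the round handle $R^{k-1}$ thickened along the flow by the cylinder $T$ that funnels incoming orbits from $\tau\subset\partial_{-}Q^{k}$; the exit set $N_{0}$ consists of the unstable exit $\partial_{-}R^{k-1}$, the ``side wall'' $(\partial_{0}\tau\cdot\R_{\geq 0})\cap(T\cup R^{k-1})$, and the complement of $\mathrm{Int}_{\partial_{-}Q^{k}}(\tau)$ in $\partial_{-}Q^{k}$.

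First, I would identify $\mathrm{Cls}(N_{1}\setminus N_{0})$ piece by piece and show that it equals $T\cup R^{k-1}$; in particular $c^{k-1}\subset\mathrm{Int}(R^{k-1})\subset\mathrm{Int}(N_{1}\setminus N_{0})$. For the maximal saturated subset inside $T\cup R^{k-1}$, any point of $T$ has its backward orbit tracing back through $\tau$ into $\mathrm{Int}(Q^{k})$, which lies outside $T\cup R^{k-1}$; hence no bi-invariant orbit meets $T$. Any bi-invariant orbit therefore lies in $R^{k-1}$, and the hyperbolic structure of the NMS flow on the round handle forces it to lie in $W^{u}(c^{k-1})\cap W^{s}(c^{k-1})=c^{k-1}$: points off $W^{u}(c^{k-1})$ have backward orbits exiting through $\partial_{+}R^{k-1}$, and points off $W^{s}(c^{k-1})$ have forward orbits exiting through $\partial_{-}R^{k-1}$. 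This verifies condition (1).

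For condition (2) I would split $N_{0}$ into its three pieces. Points of $\partial_{-}Q^{k}\setminus\tau$ or of $\partial_{-}R^{k-1}$ leave $N_{1}$ immediately under forward flow, so only $t=0$ is admitted and the implication is vacuous. Points of $\partial_{0}\tau$, which lie in the first piece of $N_{0}$, flow directly into the side-wall piece $(\partial_{0}\tau\cdot\R_{\geq 0})\cap(T\cup R^{k-1})$ and remain there until the orbit reaches $\partial_{-}R^{k-1}\subset N_{0}$. Points already on the side-wall piece behave the same way. Condition (3) is essentially immediate: each $x\in N_{1}\setminus N_{0}$ lies in the interior of $\tau$, of $T$, or of $R^{k-1}$, so a short forward-time orbit remains in $N_{1}$.

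The step I expect to be most delicate is the positive invariance of the side-wall piece: one must verify that the forward orbit of $\partial_{0}\tau$ passes cleanly from $T$ through $\partial_{+}R^{k-1}$ into $R^{k-1}$ and eventually exits through $\partial_{-}R^{k-1}$, without ever converging to $c^{k-1}$ and without drifting off the side wall into the interior of $T\cup R^{k-1}$. This will rest on the product flow-box structure of $T\cong S^{1}\times D^{k-1}\times D^{n-k-1}\times[0,1]$ together with Claim \ref{a_isolated}, which guarantees $\partial_{0}\tau\cap W^{s}(c^{k-1})=\emptyset$ and hence that the side-wall orbit truly traverses $R^{k-1}$ to its unstable boundary.
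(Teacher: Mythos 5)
Your proposal is correct and follows essentially the same route as the paper: the same identification $\mathrm{Cls}(N_{1}\setminus N_{0})=T\cup R^{k-1}$ with $I(T\cup R^{k-1})=c^{k-1}$, the same case split of $N_{0}$ for positive invariance (with the side wall $(\partial_{0}\tau\cdot\R_{\geq 0})\cap(T\cup R^{k-1})$ absorbing the only non-vacuous case), and the same immediate verification of condition (3); you merely supply more detail than the paper, e.g.\ the hyperbolicity argument for $I(R^{k-1})=c^{k-1}$ and the role of $\partial_{0}\tau\cap W^{s}(c^{k-1})=\emptyset$ (which is Claim \ref{delzerotau_flowthrough}, derived from Claim \ref{a_isolated}).
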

\begin{proof}
We have $N_{1}\setminus N_{0} = \mathrm{Int}_{\partial_{-}Q^{k}}(\tau)\cup ((T\cup (R^{k-1}\setminus\partial_{-}R^{k-1}))-\partial_{0}\tau\cdot\R_{\geq 0}) = (T\cup (R^{k-1}\setminus\partial_{-}R^{k-1}))-\partial_{0}\tau\cdot\R_{\geq 0}$.  Hence $\mathrm{Cls}(N_{1}\setminus N_{0}) = T\cup R^{k-1}$, $\mathrm{Int}(N_{1}\setminus N_{0}) = \mathrm{Int}(T\cup R^{k-1})-\partial_{0}\tau\cdot\R_{\geq 0}$ and $I(\mathrm{Cls}(N_{1}\setminus N_{0})) = I(T\cup R^{k-1}) = c^{k-1}\subset\mathrm{Int}(T\cup R^{k})-\partial_{0}\tau\cdot\R_{\geq 0} = \mathrm{Int}(N_{1}\setminus N_{0})$.  

If $x\in N_{0}$ and there is a positive number $t$ such that $x\cdot [0, t]\subset N_1$, then $x\in\partial_{0}\tau\cdot\R_{\geq 0}\cap (T\cup (R^{k-1}\setminus\partial_{-}R^{k-1}))$.  Thus $x\cdot [0, t]\subset\partial_{0}\tau\cdot\R_{\geq 0}\cap (T\cup R^{k-1})\subset N_{0}$.  

If $x\in N_{1}\setminus N_{0}$, then there is a positive number $t$ such that $x\cdot [0, t]\subset T\cup R^{k-1}\subset N_{1}$.  
\end{proof}
\begin{claim}\label{index_pair_20}
$(N_{2}, N_{0})$ is an index pair for $c^{k}\cup A\cup c^{k-1}$. 
\end{claim}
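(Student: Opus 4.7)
The plan is to verify the three defining conditions of an index pair directly, leveraging the geometry already set up in Claims \ref{index_pair_21} and \ref{index_pair_10}. Throughout, I exploit the observation that $N_2 = Q^{k}\cup T\cup R^{k-1}$ is a compact neighborhood of $c^{k}\cup A\cup c^{k-1}$ whose only ``saturated inhabitants'' are these pieces themselves.

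First I would check condition (1). The pieces cut off in $N_0$ lie in the topological boundaries of $\partial_{-}Q^{k}$, $\partial_{-}R^{k-1}$, and the forward-saturation $\partial_{0}\tau\cdot\R_{\geq 0}$ inside $T\cup R^{k-1}$, so $\mathrm{Cls}(N_{2}\setminus N_{0}) = Q^{k}\cup T\cup R^{k-1}$. To identify $I(Q^{k}\cup T\cup R^{k-1})$, I argue that any bi-infinite orbit confined to this set must have its $\alpha$-limit in $c^{k}$ and $\omega$-limit in $c^{k-1}$ (other periodic orbits are absent from $N_{2}$), so it is either $c^{k}$, $c^{k-1}$, or a flow-line of $W(c^{k}, c^{k-1})$; Claim \ref{a_isolated} and the definition of $T$ as a tube around $A - (Q^{k}\cup R^{k-1})$ show that such a flow-line must coincide with $A$. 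Containment in $\mathrm{Int}(N_{2}\setminus N_{0})$ follows because $c^{k}\subset\mathrm{Int}(Q^{k})$, $c^{k-1}\subset\mathrm{Int}(R^{k-1})$, and the middle arc of $A$ lies along the core $S^{1}\times 0\times 0\times[0,1]$ of $T$, disjoint from every subset packaged into $N_{0}$.

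For condition (2), I would split $N_{0}$ into its three listed pieces. Points of $\partial_{-}Q^{k}\setminus\mathrm{Int}_{\partial_{-}Q^{k}}(\tau)$ leave $Q^{k}$ instantly by transversality and do not lie in $T$ (since $\tau\supset T\cap\partial_{-}Q^{k}$), so the hypothesis $x\cdot[0,t]\subset N_{2}$ forces $t = 0$; the same trivial argument applies to $x\in\partial_{-}R^{k-1}$, which leaves $R^{k-1}$ immediately and, locally near $c^{k-1}$, cannot re-enter $N_{2}$. The essential case is $x\in(\partial_{0}\tau\cdot\R_{\geq 0})\cap(T\cup R^{k-1})$: here Claim \ref{delzerotau_flowthrough} guarantees $x\cdot s\notin W^{s}(c^{k-1})$ for all $s$, and the product structure of $T$ together with the inclusion $\tau\subset\partial_{+}R^{k-1}\cdot\R_{\leq 0}$ keeps $x\cdot[0,t]$ inside $\partial_{0}\tau\cdot\R_{\geq 0}$ for as long as it lies in $T\cup R^{k-1}$, hence inside $N_{0}$.

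For condition (3), given $x\in N_{2}\setminus N_{0}$, I would stratify: if $x\in\mathrm{Int}(Q^{k})$ the orbit stays in $Q^{k}$ for positive time; if $x\in\mathrm{Int}_{\partial_{-}Q^{k}}(\tau)$ it immediately enters $T$ by construction of $\tau$; if $x\in\mathrm{Int}(T)$ the product structure $T\cong S^{1}\times D^{k-1}\times D^{n-k-1}\times [0,1]$, with flow along the last factor, keeps the orbit in $T$ for positive time; and if $x\in R^{k-1}\setminus(\partial_{-}R^{k-1}\cup\partial_{0}\tau\cdot\R_{\geq 0})$ it stays in $R^{k-1}$. In every case the orbit remains in $N_{2}$ on some $[0,t]$ with $t>0$.

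The main obstacle is the nontrivial half of condition (2): showing that orbits starting on $\partial_{0}\tau\cdot\R_{\geq 0}$ cannot exit $T\cup R^{k-1}$ through a face outside $N_{0}$ before time $t$. This requires combining Claim \ref{delzerotau_flowthrough} with the way $T$ is glued to $R^{k-1}$ along $\partial_{+}R^{k-1}$, and carefully using that $N_{0}$ already contains both $\partial_{-}R^{k-1}$ and all downstream flow of $\partial_{0}\tau$ within $T\cup R^{k-1}$, so no escape route from $N_{0}$ is left unaccounted for.
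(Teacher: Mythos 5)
Your proposal is correct and follows essentially the same route as the paper: a direct verification of the three index-pair conditions using the explicit description of $N_{2}\setminus N_{0}$, the identification $I(Q^{k}\cup T\cup R^{k-1}) = c^{k}\cup A\cup c^{k-1}$, and the fact that the forward saturation $\partial_{0}\tau\cdot\R_{\geq 0}$ traps within $N_{0}$ any orbit of $N_{0}$ that stays in $N_{2}$ for positive time. The only slip is the remark that points of $\partial_{-}Q^{k}\setminus\mathrm{Int}_{\partial_{-}Q^{k}}(\tau)$ ``do not lie in $T$'': the points of $\partial_{0}\tau$ do lie in $T$ and flow forward into it, but they also belong to $(\partial_{0}\tau\cdot\R_{\geq 0})\cap(T\cup R^{k-1})\subset N_{0}$, so your third case absorbs them and the argument goes through.
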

\begin{proof}
Since $N_{2}\setminus N_{0} = ((Q^{k}\setminus\partial_{-}Q^{k})\cup T\cup (R^{k-1}\setminus\partial_{-}R^{k-1}))-\partial_{0}\tau\cdot\R_{\geq 0}$, we have $\mathrm{Cls}(N_{2}\setminus N_{0}) = Q^{k}\cup T\cup R^{k-1}$, $\mathrm{Int}(N_{2}\setminus N_{0}) = \mathrm{Int}(Q^{k}\cup T\cup R^{k})-\partial_{0}\tau\cdot\R_{\geq 0}$ and $I(\mathrm{Cls}(N_{2}\setminus N_{0})) = I(Q^{k}\cup T\cup R^{k-1}) = c^{k}\cup A\cup c^{k-1}\subset\mathrm{Int}(Q^{k}\cup T\cup R^{k-1}-\partial_{0}\tau\cdot\R_{\geq 0}) = \mathrm{Int}(N_{2}\setminus N_{0})$.  

If $x\in N_{0}$ and there is a positive number $t$ such that $x\cdot [0, t]\subset N_{2}$, then $x\in (\partial_{0}\tau\cdot\R_{\geq 0})\cap (T\cup (R^{k-1}\setminus\partial_{-}R^{k-1}))$.  Hence $x\cdot [0, t]$ must be contained in $(\partial_{0}\tau\cdot\R_{\geq 0})\cap (T\cup R^{k-1})\subset N_{0}$.  

If $x\in N_{2}\setminus N_{0} = (Q^{k}\setminus\partial_{-}Q^{k})\cup T\cup (R^{k-1}\setminus\partial_{-}R^{k-1})-\partial_{0}\tau\cdot\R_{\geq 0}$, then there is a positive number $t$ such that $x\cdot [0, 1]\subset Q^{k}\cup T\cup R^{k-1} = N_{2}$. 
\end{proof}

\subsection{The explicit construction of index pairs in the general case}
Now, we consider the general situation.  Suppose that the elements of the set $\mathcal{C}(c^{k})$ are indexed by a finite set $I$ and for each $i\in I$ let $\mathcal{A}_{i}(c^{k})$ denote the set of all connecting annuli between $c^{k}$ and $c^{k-1}_{i}$.  Suppose also the elements of $\mathcal{A}_{i}(c^{k})$ are indexed by a finite set $J_{i}$.  Thus, we may write
\[
 \begin{array}[t]{lcl}
  \mathcal{C}(c^{k}) & = & \{ c^{k-1}_{i}\ |\ i\in I\}, \\
  \mathcal{A}(c^{k}) & = & \bigcup_{i\in I}\mathcal{A}_{i}(c^{k}), \\
  \mathcal{A}_{i}(c^{k}) & = & \{ A_{ij}\ |\ j\in J_{i}\},
 \end{array}
\]
where $A_{ij}\subset W(c^{k}, c^{k-1}_{i})\ (j\in J_{i}, i\in I)$ is a connecting annulus.  Let $R^{k-1}_{i}$ be the round $k-1$ handle associated with $c^{k-1}_{i}$.  Then for each $A_{ij}\in\mathcal{A}(c^{k})$ we define the following, similarly as in the single connecting annulus case:
\[
\begin{array}[t]{lcl}
a_{ij} & = & A_{ij}\cap\partial_{-}Q^{k},\\
N(a_{ij}) &  & \mbox{a small tubular neighborhood of $a_{ij}$ in $W^{u}(c^{k})\cap\partial_{-}Q^{k}$}, \\
\tau_{ij} & = & N(a_{ij})\times D^{n-k-1}(r),\\
\partial_{0}\tau_{ij} & = & \partial N(a_{ij})\times D^{n-k-1}(r),\\
T_{ij} & = & (\tau_{ij}\cdot\R_{\geq 0})\cap (\partial_{+}R^{k-1}_{i}\cdot\R_{\leq 0}). 
\end{array}
\]
Now the parallel arguments to the proof of Claim \ref{a_isolated} and \ref{delzerotau_flowthrough} show the following.

\begin{claim}
$N(a_{ij})\cap W(c^{k}, c^{k-1}_{i}) = a_{ij}$. 
\end{claim}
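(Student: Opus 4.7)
The plan is to imitate the proof of Claim \ref{a_isolated} almost verbatim, the only new input being that we now deal with finitely many connecting annuli simultaneously and must separate them by sufficiently small tubular neighborhoods.

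First, I would note that the Morse--Smale transversality implies that $W^{u}(c^{k})$ and $W^{s}(c^{k-1}_{i})$ are saturated submanifolds meeting transversally, so $W(c^{k}, c^{k-1}_{i}) = W^{u}(c^{k}) \cap W^{s}(c^{k-1}_{i})$ is again a saturated $2$-dimensional submanifold. By definition of $\mathcal{A}_{i}(c^{k})$ we have the decomposition into connecting annuli
\[
W(c^{k}, c^{k-1}_{i}) = \bigsqcup_{j' \in J_{i}} A_{ij'}.
\]

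Next, since every flow-line is transverse to $\partial_{-}Q^{k}$ (outside $\partial_{+}Q^{k}$), the intersection $W(c^{k}, c^{k-1}_{i}) \cap \partial_{-}Q^{k}$ is a compact $1$-dimensional submanifold of $\partial_{-}Q^{k}$, and intersecting component-wise gives
\[
W(c^{k}, c^{k-1}_{i}) \cap \partial_{-}Q^{k} = \bigsqcup_{j' \in J_{i}} a_{ij'},
\]
where each $a_{ij'}$ is a single connected component: a circle in general, or one of the circles $S^{1} \times \{\pm r\}$ when $k = 1$. The connectedness of $a_{ij'}$ follows because $A_{ij'}$ is a saturated annulus whose $\alpha$-limit is $c^{k}$, so every flow-line in $A_{ij'}$ crosses $\partial_{-}Q^{k}$ transversely exactly once as it leaves the subhandle $Q^{k}$.

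Finally, since $J_{i}$ is finite and the components $a_{ij'}$ are pairwise disjoint compact submanifolds of $\partial_{-}Q^{k}$, I may shrink the tubular neighborhood $N(a_{ij})$ so as to be disjoint from every $a_{ij'}$ with $j' \neq j$. Then
\[
N(a_{ij}) \cap W(c^{k}, c^{k-1}_{i}) \;=\; N(a_{ij}) \cap \bigsqcup_{j' \in J_{i}} a_{ij'} \;=\; N(a_{ij}) \cap a_{ij} \;=\; a_{ij},
\]
which is the asserted equality. I do not expect any real obstacle here: the argument is the direct generalization of Claim \ref{a_isolated}, and the only mild subtlety is identifying the connected components of the transverse slice $W(c^{k}, c^{k-1}_{i}) \cap \partial_{-}Q^{k}$ with the $a_{ij'}$, which rests on the saturated-annulus structure of each $A_{ij'}$ together with the transversality of the flow to $\partial_{-}Q^{k}$.
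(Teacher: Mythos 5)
Your proposal is correct and follows essentially the same route as the paper, which simply invokes ``the parallel arguments to the proof of Claim \ref{a_isolated}'': you use saturation and transversality to see that $W(c^{k}, c^{k-1}_{i})\cap\partial_{-}Q^{k}$ is a compact $1$-dimensional submanifold with the $a_{ij'}$ as its components, and then shrink the tubular neighborhood to avoid the other components. Your explicit justification that each $a_{ij'}$ is a single connected component (via the once-crossing of flow-lines) only makes explicit what the paper leaves implicit.
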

\begin{claim}
$\partial_{0}\tau_{ij}\subset\partial_{-}R^{k-1}_{i}\cdot\R_{\leq 0}$, i.e., $\partial_{0}\tau_{ij}\cap W^{s}(c^{k-1}_{i}) = \emptyset$.  
\end{claim}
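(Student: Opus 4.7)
My plan is to imitate the proof of Claim \ref{delzerotau_flowthrough} for each pair $(i,j)$ separately, since the present statement differs from that single-annulus claim only by the bookkeeping of a finite family of annuli.

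Fix $(i,j)$. By the preceding claim, $N(a_{ij}) \cap W(c^{k}, c^{k-1}_{i}) = a_{ij}$. Since $a_{ij}$ is the core of the tubular neighborhood $N(a_{ij})$ in $W^{u}(c^{k}) \cap \partial_{-}Q^{k}$, we have $\partial N(a_{ij}) \cap a_{ij} = \emptyset$; combined with $\partial N(a_{ij}) \subset W^{u}(c^{k})$, this forces
\[
\partial N(a_{ij}) \cap W^{s}(c^{k-1}_{i}) \;\subset\; \partial N(a_{ij}) \cap W(c^{k}, c^{k-1}_{i}) \;=\; \emptyset.
\]
By the construction of $N(a_{ij})$ we have $N(a_{ij}) \subset \partial_{+}R^{k-1}_{i} \cdot \R_{\leq 0}$, so every forward orbit through $\partial N(a_{ij})$ enters the handle $R^{k-1}_{i}$; being disjoint from $W^{s}(c^{k-1}_{i})$, it must leave through $\partial_{-}R^{k-1}_{i}$ in finite time. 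Hence $\partial N(a_{ij}) \subset \partial_{-}R^{k-1}_{i} \cdot \R_{\leq 0}$. (The case $k = 1$ is vacuous, since then $N(a_{ij}) = a_{ij}$ and $\partial N(a_{ij}) = \emptyset$.)

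Next, I would pass from the slice $\partial N(a_{ij})$ to the product $\partial_{0}\tau_{ij} = \partial N(a_{ij}) \times D^{n-k-1}(r)$. By continuity of the flow and compactness of $\partial N(a_{ij})$, the forward-exit property survives a small transverse thickening, provided $r$ is chosen sufficiently small. Because the index set $\{(i,j) : i \in I,\ j \in J_{i}\}$ is finite, one common small $r > 0$ works simultaneously for every pair, and this is consistent with the implicit choice already fixed in Subsection \ref{ExplctCnstrctn}.

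The only point needing care is verifying that this choice of $r$ is compatible with the other constraints on $r$ already imposed in the construction of $\tau_{ij}$ and $T_{ij}$ (for instance $\tau_{ij} \subset \partial_{+}R^{k-1}_{i} \cdot \R_{\leq 0}$, or the fact that $T_{ij}$ is a relative tubular neighborhood of $A_{ij} - (Q^{k} \cup R^{k-1}_{i})$). Since every quantity involved is compact and the family is finite, a simultaneous shrink of $r$ resolves all constraints at once, and no new phenomenon appears beyond what was handled in the single-annulus case.
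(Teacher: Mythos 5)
Your proof is correct and takes essentially the same route as the paper: the paper justifies this claim only by asserting that the arguments for the single-annulus Claims \ref{a_isolated} and \ref{delzerotau_flowthrough} carry over to each pair $(i,j)$, and that is precisely what you do, merely supplying details (the exit of $\partial N(a_{ij})$ through $\partial_{-}R^{k-1}_{i}$ via the isolating property of the round handle, and the compactness/continuity argument for thickening $\partial N(a_{ij})$ to $\partial_{0}\tau_{ij}$) that the paper leaves implicit. Your observation that one sufficiently small $r>0$ serves the whole finite family is consistent with the paper's ``by choosing a smaller $r$ if necessary.''
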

\noindent
Also we have
\begin{claim}
$\tau_{ij}\cap\tau_{i'j'} = \emptyset$ if $(i, j) \neq (i', j')$.
\end{claim}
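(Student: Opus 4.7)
The plan is to reduce the claim to a simple shrinking argument built on the defining fact that the $A_{ij}$ are distinct connected components of $\bigcup_{i\in I} W(c^k, c^{k-1}_i)$.

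First I would observe that, as connected components of one set, the annuli $A_{ij}$ are pairwise disjoint, whence their slices $a_{ij} = A_{ij}\cap \partial_{-}Q^k$ are pairwise disjoint. Moreover, each $a_{ij}$ is a closed subset of the compact manifold $W^u(c^k)\cap\partial_{-}Q^k\approx S^1\times\partial D^k(r)$ (compare with the argument in Claim \ref{a_isolated}, where $W(c^k,c^{k-1}_i)\cap\partial_{-}Q^k$ was shown to be a compact $1$-submanifold). Since the index set $\bigsqcup_i J_i$ is finite, we have finitely many pairwise disjoint compact submanifolds in a compact ambient space, so there is a uniform positive lower bound for their mutual distances in any fixed metric. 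Consequently the tubular neighborhoods $N(a_{ij})$ in $W^u(c^k)\cap\partial_{-}Q^k$ can be chosen small enough to be pairwise disjoint.

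Once the $N(a_{ij})$ are disjoint, the claim follows: by the construction $\tau_{ij} = N(a_{ij})\times D^{n-k-1}(r)$ under the framing $\partial_{-}Q^k\approx S^1\times\partial D^k(r)\times D^{n-k-1}(r)$, so the projection of $\tau_{ij}$ onto the first two factors is exactly $N(a_{ij})$, and disjointness of these projections transfers verbatim to disjointness of the $\tau_{ij}$.

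The only delicate point — and it is really just bookkeeping rather than a genuine obstacle — is to check that this disjointness requirement on the $N(a_{ij})$ is compatible with the earlier constraints imposed in Section \ref{ExplctCnstrctn}, namely $N(a_{ij})\cap W(c^k,c^{k-1}_i) = a_{ij}$ (Claim \ref{a_isolated}) and $N(a_{ij})\subset \partial_{+}R^{k-1}_i\cdot \R_{\leq 0}$. All of those conditions merely stipulate that $N(a_{ij})$ lie inside a certain open neighborhood of $a_{ij}$, so they are preserved under further shrinking; choosing all the $N(a_{ij})$ simultaneously small enough satisfies every constraint at once.
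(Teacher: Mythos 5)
Your argument is correct and is essentially the one the paper intends: the paper states this claim without proof, and the evident justification is exactly yours, namely that the $a_{ij}$ are pairwise disjoint compact submanifolds of the compact manifold $W^{u}(c^{k})\cap\partial_{-}Q^{k}$ (being slices of distinct connected components $A_{ij}$), so the finitely many tubular neighborhoods $N(a_{ij})$ can be shrunk to be pairwise disjoint while preserving the earlier constraints, and disjointness of the $\tau_{ij}=N(a_{ij})\times D^{n-k-1}(r)$ follows. Your remark that the previously imposed conditions are stable under shrinking is the right bookkeeping point, so nothing is missing.
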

\noindent
Now we use the following notation:
\[
 \begin{array}[t]{lcl}
  \widehat{a} & = & \cup_{i\in I}\cup_{j\in J_{i}}a_{ij},\\
  \widehat{\tau} & = & \cup_{i\in I}\cup_{j\in J_{i}}\tau_{ij},\\
  \partial_{0}\widehat{\tau} & = & \cup_{i\in I}\cup_{j\in J_{i}}\partial_{0}\tau_{ij},\\
  \widehat{R} & = & \cup_{i\in I}R^{k-1}_{i},\\
  \partial_{-}\widehat{R} & = & \cup_{i\in I}\partial_{-}R^{k-1}_{i},\\
  \widehat{A} & = & \cup_{i\in I}\cup_{j\in J_{i}}A_{ij},\\
  \widehat{T} & = & \cup_{i\in I}\cup_{j\in J_{i}}T_{ij}, 
 \end{array}
\]
and 
\[
 \begin{array}[t]{lcl}
  \widehat{N}_{2} & = & Q^{k}\cup\widehat{T}\cup\widehat{R},\\
  \widehat{N}_{1} & = & \partial_{-}Q^{k}\cup\widehat{T}\cup\widehat{R},\\
  \widehat{N}_{0} & = & (\partial_{-}Q^{k}\setminus\mathrm{Int}_{\partial_{-}Q^{k}}\widehat{\tau})\cup((\partial_{0}\widehat{\tau}\cdot\R_{\geq 0})\cap (\widehat{T}\cup\widehat{R}))\cup\partial_{-}\widehat{R}. 
 \end{array}
\]
Then, similar arguments to the proofs of Claims \ref{index_pair_21}, \ref{index_pair_10} and \ref{index_pair_20} show the following.  
\begin{claim}
$(\widehat{N}_{2}, \widehat{N}_{1}), (\widehat{N}_{1}, \widehat{N}_{0})$ and $(\widehat{N}_{2}, \widehat{N}_{0})$ are index pairs for $c^{k}, \cup_{i\in I}c^{k-1}_{i}$ and $c^{k}\cup\widehat{A}\cup (\cup_{i\in I}c^{k-1}_{i})$ respectively.  
\end{claim}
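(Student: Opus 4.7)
The plan is to repeat the three-step analysis of Claims \ref{index_pair_21}--\ref{index_pair_20} in parallel, using the disjointness $\tau_{ij}\cap\tau_{i'j'}=\emptyset$ (and the corresponding disjointness of the $\partial_{0}\tau_{ij}$ and of the $T_{ij}$) to reduce everything to the single-annulus case already treated. First I would compute the set-theoretic complements
$\widehat{N}_{2}\setminus\widehat{N}_{1} = Q^{k}\setminus\partial_{-}Q^{k}$,
$\widehat{N}_{1}\setminus\widehat{N}_{0} = (\widehat{T}\cup(\widehat{R}\setminus\partial_{-}\widehat{R}))\setminus(\partial_{0}\widehat{\tau}\cdot\R_{\geq 0})$
and
$\widehat{N}_{2}\setminus\widehat{N}_{0} = ((Q^{k}\setminus\partial_{-}Q^{k})\cup\widehat{T}\cup(\widehat{R}\setminus\partial_{-}\widehat{R}))\setminus(\partial_{0}\widehat{\tau}\cdot\R_{\geq 0})$,
so that the closures are $Q^{k}$, $\widehat{T}\cup\widehat{R}$ and $Q^{k}\cup\widehat{T}\cup\widehat{R}$ respectively.

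Next I would verify condition (1) of an index pair by computing the maximal saturated sets of the three closures: $I(Q^{k})=c^{k}$ (immediate from the single-annulus case), $I(\widehat{T}\cup\widehat{R})=\bigcup_{i\in I}c^{k-1}_{i}$, and $I(Q^{k}\cup\widehat{T}\cup\widehat{R})=c^{k}\cup\widehat{A}\cup(\bigcup_{i\in I}c^{k-1}_{i})$. The second identity holds because any $x\in\widehat{T}$ lies in a unique $T_{ij}$ and its backward orbit leaves $\widehat{T}\cup\widehat{R}$ through $\tau_{ij}\subset\partial_{-}Q^{k}$, so any two-sided saturated orbit contained in $\widehat{T}\cup\widehat{R}$ must already lie in $\widehat{R}$, hence in some $c^{k-1}_{i}$. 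The third identity is analogous: a point $x\in Q^{k}\setminus c^{k}$ remains in $Q^{k}\cup\widehat{T}\cup\widehat{R}$ for all time if and only if its forward orbit reaches some $c^{k-1}_{i}$, which by construction of the $T_{ij}$ is equivalent to $x\in\widehat{A}$.

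For conditions (2) and (3) I would transplant the single-annulus arguments directly. The disjointness of the $\tau_{ij}$ lets me check positive invariance separately on each piece $T_{ij}\cup R^{k-1}_{i}$ and on each exit-strip $\partial_{0}\tau_{ij}\cdot\R_{\geq 0}$, exactly as in the proofs of Claims \ref{index_pair_21}--\ref{index_pair_20}. Condition (3) then follows because every $x\in\widehat{N}_{2}\setminus\widehat{N}_{1}$ remains in $Q^{k}$ for a positive time and every $x\in\widehat{N}_{1}\setminus\widehat{N}_{0}$ remains in $\widehat{T}\cup\widehat{R}$ for a positive time, just as before.

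The main obstacle I expect is the identification $I(\widehat{T}\cup\widehat{R}) = \bigcup_{i\in I}c^{k-1}_{i}$ (and its refinement used for $(\widehat{N}_{2},\widehat{N}_{0})$). One must rule out the possibility of a spurious saturated orbit produced by interactions between distinct connecting annuli that share a common target $R^{k-1}_{i}$; this is handled by combining the multi-annulus version of Claim \ref{delzerotau_flowthrough} with the Morse-Smale transversality, which forces every non-periodic orbit entering $\widehat{T}\cup\widehat{R}$ to leave in negative time through $\partial_{0}\widehat{\tau}$. Once this is in place, the three index-pair claims follow uniformly from the single-annulus computations.
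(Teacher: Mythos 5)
Your proposal is correct and follows essentially the same route as the paper, which simply asserts that the arguments of Claims \ref{index_pair_21}--\ref{index_pair_20} carry over verbatim once the disjointness of the $\tau_{ij}$ (hence of the $\partial_{0}\tau_{ij}$ and the $T_{ij}$) is in hand. Your explicit computation of the complements, the identification of the maximal saturated sets, and the piecewise verification of conditions (2) and (3) are exactly the intended generalization of the single-annulus case.
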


\subsection{A boundary operator along flow-lines}
In this subsection, we calculate the homology groups $H_{m}(\widehat{N}_{2}, \widehat{N}_{1})$ and $H_{m}(\widehat{N}_{1}, \widehat{N}_{0})$ and define a boundary operator obtained from flow-lines themselves.  

\begin{prop}
For any non-negative integer $m$, the relative homology group $H_{m}(\widehat{N}_{2}, \widehat{N}_{1})$ is isomorphic to $H_{m}(Q^{k}, \partial_{-}Q^{k})$.  In particular, $H_{k+1}(\widehat{N}_{2}, \widehat{N}_{1})\cong\Z$.  
\end{prop}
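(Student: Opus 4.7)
The plan is to identify the pair $(\widehat{N}_2, \widehat{N}_1)$ with $(Q^k, \partial_{-}Q^k)$ by excision, and then compute the latter using the explicit product structure of $Q^k$. The pivotal topological fact is that
\[
Q^k \cap \widehat{N}_1 \;=\; \partial_{-}Q^k,
\]
which I would verify as follows: since $\widehat{\tau} \subset \partial_{-}Q^k$ and the flow is transverse to $\partial_{-}Q^k$ pointing out of $Q^k$, any forward orbit starting on $\widehat{\tau}$ leaves $Q^k$ immediately, and, lying in $W^u(c^k)$ with $\omega$-limit on some $c^{k-1}_i$, never returns; hence $Q^k \cap \widehat{T} = \widehat{\tau}$. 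Moreover $Q^k \subset \mathrm{Int}(R^k)$, and the round handles $R^k$ and $R^{k-1}_i$ meet only along boundary attaching regions, so $Q^k \cap \widehat{R} = \emptyset$. Consequently $\widehat{N}_2 \setminus \widehat{N}_1 = Q^k \setminus \partial_{-}Q^k$.

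For the excision step I would choose a small open collar $V$ of $\widehat{\tau}$ inside $\widehat{T}$, using the product description $\widehat{T} \approx \widehat{\tau} \times [0,1]$ along the flow direction, and set $U = (\widehat{T} \cup \widehat{R}) \setminus V$. Then $\overline{U} \subset \mathrm{Int}_{\widehat{N}_2}(\widehat{N}_1)$, so excision yields
\[
H_m(\widehat{N}_2, \widehat{N}_1) \;\cong\; H_m(Q^k \cup V,\; \partial_{-}Q^k \cup V).
\]
The pair on the right deformation-retracts onto $(Q^k, \partial_{-}Q^k)$ by sliding $V$ back to $\widehat{\tau}$ along the collar coordinate (the retraction sends $\partial_{-}Q^k \cup V$ into $\partial_{-}Q^k$), yielding the claimed isomorphism $H_m(\widehat{N}_2, \widehat{N}_1) \cong H_m(Q^k, \partial_{-}Q^k)$.

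For the final computation, contracting the $D^{n-k-1}$ factor gives a homotopy equivalence of pairs $(Q^k, \partial_{-}Q^k) \simeq (S^1 \times D^k,\, S^1 \times S^{k-1})$. The K\"{u}nneth formula for pairs then produces
\[
H_{k+1}(S^1 \times D^k,\, S^1 \times S^{k-1}) \;\cong\; H_1(S^1) \otimes H_k(D^k, S^{k-1}) \;\cong\; \Z,
\]
which closes the proof.

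The main obstacle I anticipate is verifying the excision hypothesis and the pair deformation retraction cleanly: one must ensure $\overline{U} \subset \mathrm{Int}_{\widehat{N}_2}(\widehat{N}_1)$ near the corner $\widehat{\tau}$ where $Q^k$ meets $\widehat{T}$, and simultaneously near $\partial_{+}\widehat{R}$ where $\widehat{T}$ glues onto $\widehat{R}$; both reduce to careful bookkeeping of the collar structure of $\widehat{T}$ along the flow, but nothing deeper is required.
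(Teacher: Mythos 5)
Your proof is correct and takes essentially the same route as the paper: the paper excises $(\widehat{T}\setminus\widehat{\tau})\cup\widehat{R}$ from the pair $(\widehat{N}_{2},\widehat{N}_{1})$ in one step to land directly on $(Q^{k},\partial_{-}Q^{k})$, asserting $\mathrm{Cls}_{\widehat{N}_{2}}((\widehat{T}\setminus\widehat{\tau})\cup\widehat{R})=\mathrm{Int}_{\widehat{N}_{2}}(\widehat{N}_{1})=\widehat{T}\cup\widehat{R}$. Your variant --- excising only $(\widehat{T}\cup\widehat{R})\setminus V$ for a collar $V$ of $\widehat{\tau}$ and then deformation retracting $V$ back along the flow coordinate --- is a more careful handling of the corner along $\widehat{\tau}$ where the excision hypothesis is delicate, and your K\"{u}nneth computation of $H_{k+1}(Q^{k},\partial_{-}Q^{k})\cong\Z$ matches the paper's earlier lemma.
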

\begin{proof}
For the subset $(\widehat{T}\setminus\widehat{\tau} )\cup \widehat{R}\subset \widehat{N}_{1}$, we have $\mathrm{Cls}_{\widehat{N}_{2}}((\widehat{T}\setminus\widehat{\tau} )\cup \widehat{R}) = \widehat{T}\cup \widehat{R}$ and $\mathrm{Int}_{\widehat{N}_{2}}(\widehat{N}_{1}) = \widehat{T}\cup \widehat{R}$.  Thus $\mathrm{Int}_{\widehat{N}_{2}}(\widehat{N}_{1})\subset\mathrm{Cls}_{\widehat{N}_{2}}((\widehat{T}\setminus\widehat{\tau} )\cup \widehat{R})$, and by the excision we have 
\begin{eqnarray*}
H_{m}(\widehat{N}_{2}, \widehat{N}_{1}) & \cong & H_{m}(\widehat{N}_{2}\setminus ((\widehat{T}\setminus\widehat{\tau})\cup \widehat{R}), \widehat{N}_{1}\setminus ((\widehat{T}\setminus\widehat{\tau})\cup \widehat{R})) \\
 & = & H_{m}(Q^{k}, \partial_{-}Q^{k}).  
\end{eqnarray*}
\end{proof}

\begin{prop}
For any non-negative integer $m$, the relative homology group $H_{m}(\widehat{N}_{1}, \widehat{N}_{0})$ is isomorphic to $H_{m}(\widehat{R}, \partial_{-}\widehat{R})$.  In particular, $H_{k}(\widehat{N}_{1}, \widehat{N}_{0})$ is the free Abelian group of rank $|I|$, where $|I|$ denotes the number of elements of the index set $I$ of $\mathcal{C}(c^{k})$.  
\end{prop}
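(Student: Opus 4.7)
The plan is to recognize $H_m(\widehat{N}_1, \widehat{N}_0)$ as an instance of the Conley-index topological invariant and then to compute the right-hand side directly from the disjoint round-handle structure. First I would verify that the pair $(\widehat{R}, \partial_-\widehat{R}) = \bigsqcup_{i \in I}(R^{k-1}_i, \partial_-R^{k-1}_i)$ is itself an index pair for the isolated compact saturated set $S = \bigcup_{i \in I} c^{k-1}_i$: each factor $(R^{k-1}_i, \partial_-R^{k-1}_i)$ is an index pair for $c^{k-1}_i$ by the remark made immediately after the definition of index pair, and the three defining conditions pass to the disjoint union because the round handles are pairwise disjoint. The preceding claim already shows that $(\widehat{N}_1, \widehat{N}_0)$ is another index pair for the same $S$.

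Next I would invoke the topological invariance of the relative homology of index pairs stated in Section \ref{ConleyIndex_Bdry}, itself a consequence of Proposition \ref{homotopy_inv_indexpair} together with the cofibration/quotient identification. After, if necessary, replacing each of the two pairs by a regular index pair, this invariance gives the desired isomorphism $H_m(\widehat{N}_1, \widehat{N}_0) \cong H_m(\widehat{R}, \partial_-\widehat{R})$ for every $m \geq 0$. The specific rank statement then follows from the disjoint decomposition
\[
H_m(\widehat{R}, \partial_-\widehat{R}) \cong \bigoplus_{i \in I} H_m(R^{k-1}_i, \partial_-R^{k-1}_i),
\]
together with the lemma used in the proof of Proposition \ref{RHD_ChainGrp} (applied with $k-1$ in place of $k$), which identifies each summand in degree $m = k$ with $\Z$; hence $H_k(\widehat{N}_1, \widehat{N}_0) \cong \Z^{|I|}$.

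The hard part, I expect, is the regularity technicality behind the invariance step: one has to check that both index pairs can be replaced by regular ones without altering their relative homology. For $(\widehat{R}, \partial_-\widehat{R})$ this is immediate from the smooth manifold-with-boundary structure. For $(\widehat{N}_1, \widehat{N}_0)$ the explicit description via the $\tau$-tubes, the flow boxes $\widehat{T}$, and the round handles $\widehat{R}$ makes the inclusion $\widehat{N}_0 \hookrightarrow \widehat{N}_1$ a cofibration after a small collaring; alternatively one can simply appeal to the general modification procedure referenced in Section 5.1 of \cite{Salamon}.
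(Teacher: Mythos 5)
Your argument is correct, but it takes a genuinely different route from the paper. You treat $(\widehat{N}_{1}, \widehat{N}_{0})$ and $(\widehat{R}, \partial_{-}\widehat{R})$ as two index pairs for the same isolated compact saturated set $\bigcup_{i\in I}c^{k-1}_{i}$ and invoke the Conley-index invariance of Proposition \ref{homotopy_inv_indexpair} (plus the cofibration/quotient identification) to get the isomorphism abstractly; the paper instead computes directly, excising $\partial_{-}Q^{k}\setminus\widetilde{\tau}$ for a slightly enlarged tube $\widetilde{\tau}$, contracting $\widetilde{\tau}\cup\widehat{T}$ into $\widehat{R}$, and then building an explicit homotopy equivalence $(\widehat{R}, (\partial_{0}\widehat{\tau}\cdot\R_{\geq 0}\cap\widehat{R})\cup\partial_{-}\widehat{R})\rightarrow(\widehat{R}, \partial_{-}\widehat{R})$ by shrinking along flow-lines and extending via the cofibration property. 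Your approach is shorter and reuses machinery already set up in the section, at the cost of delegating the regularity of $(\widehat{N}_{1}, \widehat{N}_{0})$ to Salamon's modification procedure --- and note that since the proposition asserts something about $H_{m}(\widehat{N}_{1}, \widehat{N}_{0})$ itself, you really do need the actual pair (not just a regularized replacement) to be a good pair, which your explicit description supports but which deserves a sentence of justification. The paper's hands-on computation buys something you lose: a concrete, flow-induced isomorphism whose compatibility with the maps $f$ and $g$ into the filtration $(M^{(k)}, M^{(k-1)}, M^{(k-2)})$ is what makes the commutative diagram, and hence the definition of $\partial^{^\mathrm{NMS}}_{k}$, work in the next subsection. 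Your final rank computation via the disjoint-union decomposition and the lemma behind Proposition \ref{RHD_ChainGrp} (with $k-1$ in place of $k$) matches the paper's conclusion and is fine.
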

\begin{proof}
Let $\widetilde{\tau}$ be an expanded tubular neighborhood of $\widehat{a}$ in $\partial_{-}Q^{k}$ such that $\widehat{a}\subset\widehat{\tau}\subset\mathrm{Int}_{\partial_{-}Q^{k}}(\widetilde{\tau})$.  Then for the subset $\partial_{-}Q^{k}\setminus\widetilde{\tau}\subset \widehat{N}_{0}$, we have $\mathrm{Cls}(\partial_{-}Q^{k}\setminus\widetilde{\tau}) = \partial_{-}Q^{k}\setminus\mathrm{Int}_{\partial_{-}Q^{k}}(\widetilde{\tau})$ and $\mathrm{Int}_{\widehat{N}_{1}}(\widehat{N}_{0}) = \partial_{-}Q^{k}\setminus\widehat{\tau}$.  Thus $\mathrm{Cls}(\partial_{-}Q^{k}\setminus\widetilde{\tau})\subset\mathrm{Int}_{\widehat{N}_{1}}(\widehat{N}_{0})$, therefore by the excision and homotopy equivalences,  we have
\begin{eqnarray*}
H_{m}(\widehat{N}_{1}, \widehat{N}_{0}) & \cong & H_{m}(\widehat{N}_{1}\setminus (\partial_{-}Q^{k}\setminus\widetilde{\tau}), \widehat{N}_{0}\setminus (\partial_{-}Q^{k}\setminus\widetilde{\tau})) \\
 & = & H_{m}(\widetilde{\tau}\cup \widehat{T}\cup \widehat{R}, (\widetilde{\tau}\setminus\mathrm{Int}_{\partial_{-}Q^{k}}(\widehat{\tau}))\cup ((\partial_{0}\widehat{\tau}\cdot\R_{\geq 0})\cap (\widehat{T}\cup \widehat{R}))\cup\partial_{-}\widehat{R}) \\
 & \cong & H_{m}(\widehat{R}, (\partial_{0}\widehat{\tau}\cdot\R_{\geq 0}\cap \widehat{R})\cup\partial_{-}\widehat{R}) \\
 & \cong & H_{m}(\widehat{R}, \partial_{-}\widehat{R}).
\end{eqnarray*}
Indeed, the isomorphism in the first line is the excision, the equality in the second line is just the notation rewritten, the third one is by contracting $\widetilde{\tau}\cup \widehat{T}$, and the last one is induced by the homotopy equivalence defined as follows: 
For simplicity of the notation, set $Y=\widehat{R}$, $B=(\partial_{0}\widehat{\tau}\cdot\R_{\geq 0}\cap \widehat{R})\cup\partial_{-}\widehat{R}$ and $C=\partial_{-}\widehat{R}$.  Let $h_{t}:B\rightarrow Y$ be the homotopy defined by shrinking $(\partial_{0}\widehat{\tau}\cdot\R_{\geq 0})\cap \widehat{R}$ along the flow-lines starting at $\partial_{0}\widehat{\tau}$ so that $h_{0}=\mathrm{id}_{Y}|B$, $h_{1}(B)=C$ and $h_{t}|C$ is the inclusion $C\hookrightarrow Y$.  Then since $B\hookrightarrow Y$ is a cofibration, we have an extension $\hat{h}_{t}:Y\rightarrow Y$ of $h_{t}$.  Hence we have a map $\hat{h}_{1}:Y\rightarrow Y$ which is homotopic to the identity, $\hat{h}_{1}(B)=C$ and $\hat{h}_{1}|C$ is the inclusion $C\hookrightarrow Y$.  Then $\hat{h}_{1}:(Y, B)\rightarrow (Y, C)$ is a homotopy equivalence with homotopy inverse $\mathrm{id}_{Y}:(Y, C)\rightarrow (Y, B)$.  Thus $\hat{h}_{1}$ induces the last isomorphism.  
\end{proof}

In view of these propositions, we can regard $H_{k+1}(\widehat{N}_{2}, \widehat{N}_{1})$ and $H_{k}(\widehat{N}_{1}, \widehat{N}_{0})$ as free Abelian groups generated by periodic orbits $c^{k}$ and $\{ c^{k-1}_{i}\ |\ i\in I\}$ respectively.  Note that since $(\widehat{N}_{2}, \widehat{N}_{1})$ and $(\widehat{N}_{1}, \widehat{N}_{0})$ are (regular) index pairs for $c^{k}$ and $\cup_{i\in I}c^{k-1}_{i}$ respectively, $H_{k+1}(\widehat{N}_{2}, \widehat{N}_{1})$ and $H_{k}(\widehat{N}_{1}, \widehat{N}_{0})$ are invariants of $c^{k}$ and $\cup_{i\in I}c^{k-1}_{i}$ respectively.  
Next let $\emptyset = M^{(-1)}\subset M^{(0)}\subset M^{(1)}\subset\cdots\subset M^{(n-1)} = M$ be the filtration obtained from the NMS flow as in the Section 3.  
Then we define continuous maps $f: (\widehat{N}_{2}, \widehat{N}_{1})\rightarrow (M^{(k)}, M^{(k-1)})$ and $g: (\widehat{N}_{1}, \widehat{N}_{0})\rightarrow (M^{(k-1)}, M^{(k-2)})$ as follows.  
Let $C(\partial_{-}Q^{k})$ denote a (relative) open collar of $\partial_{-}Q^{k}$ in $Q^{k}$, i.e., $S^{1}\times\partial_{-}D^{k}\times(D^{n-k-1}(r)\setminus D^{n-k-1}(r-\epsilon))$ with respect to the framing $F$.  Let $f|Q^{k}\setminus C(\partial_{-}Q^{k})$ and $f|\widehat{R}$ be the inclusions $Q^{k}\setminus C(\partial_{-}Q^{k})\hookrightarrow M^{(k)}$ and $\widehat{R}\hookrightarrow M^{(k-1)}$ respectively.  $f|C(\partial_{-}Q^{k})\cup\widehat{T}$ is defined to be the map which is pushing $\partial_{-}Q^{k}$ and shrinking $\widehat{T}$ into $\widehat{R}$ along flow-lines passing through $\partial_{-}Q^{k}$.  Thus $f(\widehat{N}_{1})\subset\widehat{R}\subset M^{(k-1)}$ and $f$ is homotopic to the inclusion $\widehat{N}_{2}\hookrightarrow M^{(k)}$.  As for the map $g$, let $h:\widehat{R}\rightarrow\widehat{R}$ be a continuous map pushing $((\partial_{-}Q^{k}\setminus\mathrm{Int}_{\partial_{-}Q^{k}}\widehat{\tau})\cdot\R_{\geq 0})\cap\widehat{R}$ into $\partial_{-}\widehat{R}$ along flow-lines similarly as in the definition of $f$.  We define $g$ to be the composition $h\circ(f|\widehat{N}_{1})$.  Then, $g(\widehat{N}_{0})\subset\partial_{-}\widehat{R}\subset M^{(k-2)}$ and $g$ is homotopic to $f|\widehat{N}_{1}$.  It is easy to see the following diagram is commutative:
\begin{equation}\label{big_diagram}
 \vcenter{
 \xymatrix@C=0pt{
  H_{k+1}(\widehat{N}_{2}, \widehat{N}_{1}) \ar[rrrr]^{f_{*}}  \ar[rdd]^{\partial} \ar[dddd]_{\partial}& & & & H_{k}(M^{(k)}, M^{(k-1)}) \ar[ldd]_{\partial} \ar[dddd]^{\partial} \\
     & & H_{k+1}(Q^{k}, \partial_{-}Q^{k}) \ar[llu]^{\cong}_{\mathrm{exc}} \ar[rru] \ar[dd]^{\partial} & & \\
     & H_{k}(\widehat{N}_{1}) \ar[ldd] & & H_{k}(M^{(k-1)}) \ar[rdd] & \\
     & & H_{k}(\partial_{-}Q^{k})\ar[lu] \ar[ru]_{(f|\partial_{-}Q^{k})_{*}} & &\\
  H_{k}(\widehat{N}_{1}, \widehat{N}_{0}) \ar[rrrr]^{g_{*}} & & & & H_{k}(M^{(k-1)}, M^{(k-2)})  
  }
}
\end{equation}
Here, the bottom pentagon is commutative since $g|\partial_{-}Q^{k}$ is homotopic to $f|\partial_{-}Q^{k}$.  Note that the homomorphisms $f_{*}$ and $g_{*}$ are monomorphisms.  Now we write the generators of $H_{k+1}(\widehat{N}_{2}, \widehat{N}_{1})$ and $H_{k}(\widehat{N}_{1}, \widehat{N}_{0})$ as $\langle c^{k}\rangle$ and $\langle c^{k-1}_{i}\rangle$ etc. and suppose that the image of $\langle c^{k}\rangle$ by the connecting homomorphism $\partial_{k+1}: H_{k+1}(\widehat{N}_{2}, \widehat{N}_{1})\rightarrow H_{k}(\widehat{N}_{1}, \widehat{N}_{0})$ is expressed as a linear combination $\partial_{k+1}(\langle c^{k}\rangle) = \sum_{i\in I}[c^{k};c^{k-1}_{i}]\langle c^{k-1}_{i}\rangle\quad ([c^{k};c^{k-1}_{i}]\in\Z )$.  On the basis of this expression, we introduce the following:  Let $\mathcal{F}$ denote the NMS-foliation defined by the flow $\varphi_{t}$.  Recall that $\mathcal{F}$ is assumed to be simple which means the associated RHD is simple.  
Formally, we define $C^{^\mathrm{NMS}}_{k}(\mathcal{F})$ to be a free Abelian group generated by all compact leaves of $\mathcal{F}$(i.e., all periodic orbits of $\varphi_{t}$) of index $k$.  Moreover, we define
\[
 \partial^{^\mathrm{NMS}}_{k}(\langle c^{k}\rangle ) = \sum_{c^{k-1}\in\mathcal{C}(c^{k})}[c^{k};c^{k-1}]\langle c^{k-1}\rangle
\]
so that we have a homomorphism $\partial^{^\mathrm{NMS}}_{k}: C^{^\mathrm{NMS}}_{k}(\mathcal{F})\rightarrow C^{^\mathrm{NMS}}_{k-1}(\mathcal{F})$.  
By the definition of $\partial^{\mathrm{NMS}}_{k}$ and the commutativity of the outermost square of the diagram (\ref{big_diagram}), we have the following commutative diagram:
\begin{equation}
\vcenter{
\xymatrix@C=60pt{
 C^{^\mathrm{NMS}}_{k}(\mathcal{F}) \ar[rr]^{\cong} \ar[ddd]_{\partial^{^\mathrm{NMS}}_{k}}& & H_{k+1}(M^{(k)}, M^{(k-1)}) \ar[ddd]^{\partial_{k+1}}\\
  & H_{k+1}(\widehat{N}_{2}, \widehat{N}_{1}) \ar[d]^{\partial_{k+1}} \ar[lu] \ar[ru]^{f_{*}} & \\
  & H_{k}(\widehat{N}_{1}, \widehat{N}_{0}) \ar[ld] \ar[rd]^{g_{*}} & \\
 C^{^\mathrm{NMS}}_{k-1}(\mathcal{F}) \ar[rr]^{\cong} & & H_{k}(M^{(k-1)}, M^{(k-2)})
}
} 
\end{equation}
Hence we have $\partial^{^\mathrm{NMS}}_{k-1}\circ\partial^{^\mathrm{NMS}}_{k}=0$ so that $\{ C^{^\mathrm{NMS}}_{*}(\mathcal{F}), \partial^{^\mathrm{NMS}}_{*}\}$ is a chain complex.  Let $H^{\mathrm{NMS}}_{*}(\mathcal{F})$ denote its homology and we call it the {\em homology} of NMS-foliation $\mathcal{F}$ on $M$.  The homology of an NMS-foliation is isomorphic to the homology of its associated RHD since they are defined as isomorphic chain complexes.  Thus we have

\begin{Thm}
For a simple NMS-foliation $\mathcal{F}$ on $M$, a homology $H^{\mathrm{NMS}}_{*}(\mathcal{F})$ of $\mathcal{F}$ is defined which is isomorphic to the homology of the RHD associated with $\mathcal{F}$. 
\end{Thm}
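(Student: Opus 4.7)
The plan is to read off the conclusion directly from the second commutative diagram preceding the statement: once its horizontal arrows are identified as isomorphisms of free abelian groups respecting the distinguished bases, the diagram exhibits an isomorphism of chain complexes $\{C^{^\mathrm{NMS}}_*(\mathcal{F}), \partial^{^\mathrm{NMS}}_*\}\xrightarrow{\cong}\{C^{^\mathrm{RHD}}_*(\rho(M)), \partial^{^\mathrm{RHD}}_*\}$, after which the homology functor yields the theorem.

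First I would verify that the top horizontal arrow is an isomorphism carrying basis to basis. By construction $C^{^\mathrm{NMS}}_k(\mathcal{F})$ is free abelian on the index-$k$ periodic orbits of $\varphi_t$, while by Proposition~\ref{RHD_ChainGrp} the group $C^{^\mathrm{RHD}}_k(\rho(M))=H_{k+1}(M^{(k)}, M^{(k-1)})$ is free of rank $m_k$ with canonical basis the oriented core circles, under the orientation convention fixed in Section~\ref{Homology_RHD}. The upper triangle of diagram~(\ref{big_diagram}) factors the top arrow through the excision isomorphism $H_{k+1}(\widehat{N}_2, \widehat{N}_1)\cong H_{k+1}(Q^k, \partial_-Q^k)$ followed by the inclusion-induced map into $(M^{(k)}, M^{(k-1)})$; since $f$ acts as the inclusion on $Q^k\setminus C(\partial_-Q^k)$, which contains the core circle $c^k$, the generator $\langle c^k\rangle$ is sent to the basis element of $C^{^\mathrm{RHD}}_k(\rho(M))$ labeled by the oriented $c^k$. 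The analogous argument for the bottom horizontal arrow, using that $g$ factors through the flow-line retraction $h$ of $\widehat{R}$ onto $\partial_-\widehat{R}$ which fixes each core circle of $R^{k-1}_i$, identifies it as an isomorphism sending $\langle c^{k-1}_i\rangle$ to its $C^{^\mathrm{RHD}}_{k-1}(\rho(M))$-counterpart.

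Next I would verify commutativity of the two squares. The inner triangles commute by the very definition of $\partial^{^\mathrm{NMS}}_k$ as the connecting homomorphism of the triple $(\widehat{N}_2, \widehat{N}_1, \widehat{N}_0)$ expressed in the chosen basis. The outer square is the outermost face of diagram~(\ref{big_diagram}), whose commutativity follows from the naturality of the triple-long-exact-sequence connecting homomorphism under the map of triples induced by $f$ (and $g$ on the lower pair). Combining horizontal isomorphisms with the commutativity shows $\partial^{^\mathrm{RHD}}_k\circ[\text{top iso}] = [\text{bottom iso}]\circ\partial^{^\mathrm{NMS}}_k$, i.e., the two chain complexes are isomorphic as chain complexes. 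Passing to homology then gives $H_*(\mathcal{F})\cong H^{^\mathrm{RHD}}_*(\rho(M))$.

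The only delicate point, and precisely the reason the preceding subsection expends so much effort on the explicit construction of the index pair triple $(\widehat{N}_2, \widehat{N}_1, \widehat{N}_0)$ and the comparison maps $f$ and $g$, is to render the basis-to-basis identification tautological and to guarantee commutativity of the outer square at the chain level rather than merely up to sign. Once that bookkeeping is in place, no further analytic or geometric input is required: the theorem reduces to the formal statement that isomorphic chain complexes have isomorphic homology.
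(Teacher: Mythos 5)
Your proposal is correct and follows essentially the same route as the paper: the theorem is obtained by reading off from the second commutative diagram that the horizontal arrows give a basis-preserving isomorphism of chain complexes $\{C^{^\mathrm{NMS}}_*(\mathcal{F}), \partial^{^\mathrm{NMS}}_*\}\cong\{C^{^\mathrm{RHD}}_*(\rho(M)), \partial^{^\mathrm{RHD}}_*\}$, with the explicit index pairs and the maps $f$, $g$ doing exactly the bookkeeping you describe (the paper checks the outer square via the inner faces of diagram~(\ref{big_diagram}), using that $g|\partial_{-}Q^{k}$ is homotopic to $f|\partial_{-}Q^{k}$, rather than by invoking naturality for a map of triples, but this is only a difference of phrasing). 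Passing to homology then gives the stated isomorphism, just as in the paper.
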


\section{Calculations for NMS-foliations associated with Seifert fibrations}\label{SF}

In this section, we calculate the homology groups of NMS-foliations which are naturally associated with Seifert fibrations. Recall that we assume everything is orientable, for simplicity.  Let $M$ be a closed connected 3-manifold and $p:M\rightarrow\Sigma$ a Seifert fibration over closed surface $\Sigma$. Seifert fibrations are classified by the {\em Seifert invariants} $(g; \beta_{1}/\alpha_{1}, \beta_{2}/\alpha_{2}, \ldots , \beta_{m}/\alpha_{m})$.  Here, $g$ denotes the genus of $\Sigma$ and $(\alpha_{i}, \beta_{i})$ is a {\em Seifert pair} of a fiber.  More precisely, let $D_{1},\ldots , D_{m}$ be a non-empty collection of disjoint closed disks in $\Sigma$ so that $\check{M} = p^{-1}(\Sigma\setminus\cup_{i=1}^{m}\mathrm{Int}D_{i})\stackrel{p}{\rightarrow}\check{\Sigma} = \Sigma\setminus\cup_{i=1}^{m}\mathrm{Int}D_{i}$ is a locally trivial fibration admitting a section $s:\check{\Sigma}\rightarrow \check{M}$, while each $p^{-1}(D_{i})\approx D^{2}\times S^{1}$ is a solid torus, called a {\em fibered solid torus}.  The Seifert pair $(\alpha_{i}, \beta_{i})$ which is associated with each $D_{i}$ is a coprime integers, with $\alpha_{i}\geq 1$ so that $[s(\partial D_{i})] = \beta_{i}\in\Z\cong\pi_{1}(p^{-1}(D_{i}))$ and fibering $p$ over $D_{i}$ is given by 
\[
p^{-1}(D_{i})\approx D^{2}\times S^{1}\rightarrow D^{2}\ ;\ (re^{\sqrt{-1}\theta}, e^{\sqrt{-1}\phi})\mapsto re^{\sqrt{-1}(\alpha_{i}\theta -\nu_{i}\phi)}
\]
where $\nu_{i}$ is an inverse of $\beta_{i}$ modulo $\alpha_{i}$ and $D^{2}$ and $D_{i}$ are considered as the unit disk in $\C$.  In this fibered solid torus, we call the core circle $p^{-1}(0)$\smallskip\ a {\em regular fiber} in the case $\alpha_{i} = 1$ and an {\em exceptional fiber} in the case $\alpha_{i} > 1$.  Thus we have 
$\left[\begin{array}{cc}
       \ell      & \nu_{i} \\
       \beta_{i} & \alpha_{i}	
 \end{array}\right]$\smallskip
as an attaching diffeomorphism (or framing-change) from $\partial_{i}\check{M} = \partial p^{-1}(D_{i})$ with the section-regular-fiber framing to $\partial (p^{-1}(D_{i}))\approx\partial D^{2}\times S^{1}$ with the meridian-longitude ($[\partial D^{2}\times\{ 1\}]-[\{ 1\}\times S^{1}]$) framing.  Here, $\ell$ is an integer satisfying $\nu_{i}\beta_{i} - 1 = \ell\alpha_{i}$ and note that the diffeomorphism is orientation-reversing.  Then the Seifert invariant classifies Seifert fibrations as follows (see \cite{Neumann-Raymond} and \cite{Eisenbud-Hirsch-Neumann}).  

\begin{prop}
Let $p:M\rightarrow\Sigma$ and $p':M'\rightarrow\Sigma'$ be two Seifert fibrations with associated Seifert invariants $(g;\beta_{1}/\alpha_{1},\ldots , \beta_{m}/\alpha_{m})$ and $(g';\beta'_{1}/\alpha'_{1},\ldots , \beta'_{m'}/\alpha'_{m'})$ respectively.  Then $p$ and $p'$ are orientation preservingly diffeomorphic by a fiber preserving diffeomorphism if and only if, after reindexing if necessary, there exists $n$ such that
\begin{enumerate}
\item 
$\alpha_{i} = \alpha'_{i}$ for $i = 1,\ldots , n$ and $\alpha_{j} = \alpha'_{j'} = 1$ for $j, j' > n$,
\item
$\beta_{i}\equiv\beta'_{i}\mod\alpha_{i}$ for $i = 1, \ldots , n$,
\item
${\displaystyle \sum_{i=1}^{m}\beta_{i}/\alpha_{i} = \sum_{i=1}^{m'}\beta'_{i}/\alpha'_{i}}$.  
\end{enumerate}
\end{prop}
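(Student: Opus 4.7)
For the \emph{only if} direction, let $\Phi : M \to M'$ be an orientation-preserving fiber-preserving diffeomorphism; it descends to a homeomorphism of base surfaces, giving $g = g'$. Exceptional fibers are intrinsically characterized as those fibers with non-trivial local $S^1$-isotropy, of order $\alpha_i$, so $\Phi$ sets up a bijection between the exceptional fibers of $p$ and those of $p'$ matching the $\alpha$'s, yielding (1) after reindexing. Restricting $\Phi$ to fibered solid torus neighborhoods of corresponding exceptional fibers produces an orientation-preserving fiber-preserving diffeomorphism of local fibered tori; a direct computation from the defining formula $(re^{\sqrt{-1}\theta}, e^{\sqrt{-1}\phi}) \mapsto re^{\sqrt{-1}(\alpha_i\theta - \nu_i\phi)}$ shows these are classified by the residue $\beta_i \bmod \alpha_i$, giving (2). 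For (3), use that the rational Euler number $e(p) = -\sum_i \beta_i/\alpha_i$ is an invariant of the oriented Seifert fibration: it measures the obstruction in $H^2(\Sigma;\mathbb{Q}) \cong \mathbb{Q}$ to extending the section $s$ from $\check{\Sigma}$ over all of $\Sigma$.

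For the \emph{if} direction, assume (1), (2), (3) together with $g = g'$. Enlarge $\{D_i\}$ and $\{D'_j\}$ by appending trivial regular-fiber disks if necessary so that both lists have the same length, and pick an orientation-preserving diffeomorphism $\phi : \Sigma \to \Sigma'$ carrying $D_i$ to $D'_i$. For each exceptional index $i \le n$, condition (2) supplies an orientation-preserving fiber-preserving diffeomorphism $\Phi_i : p^{-1}(D_i) \to p'^{-1}(D'_i)$ of local fibered solid tori via the local classification of the previous paragraph; for the remaining regular-fiber disks any two local models are trivially diffeomorphic. Over $\check{\Sigma}$ and $\check{\Sigma}'$ the chosen sections trivialize the $S^1$-bundles, so $\phi|_{\check{\Sigma}}$ lifts to a fiber-preserving diffeomorphism $\check{\Phi}$; the remaining task is to adjust $\check{\Phi}$ by a bundle automorphism of $\check{M} \to \check{\Sigma}$ so that it matches each $\Phi_i$ on the common boundary torus with its section-regular-fiber framing.

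The main obstacle is this final gluing step, whose heart is the classical Neumann--Raymond identification of $-\sum \beta_i/\alpha_i$ with the Euler-number obstruction. The discrepancies of the boundary framings define a class in $\bigoplus_i H^1(\partial D_i; \mathbb{Z})$, and this class is realized by a bundle automorphism of $\check{M}$ precisely when its image in $H^2(\Sigma; \mathbb{Q})$ under the connecting homomorphism of the pair $(\Sigma, \check{\Sigma})$ vanishes. An explicit computation (tracking the section-regular-fiber framings against the meridian-longitude framings using the attaching matrix $\bigl[\begin{smallmatrix}\ell & \nu_i\\ \beta_i & \alpha_i\end{smallmatrix}\bigr]$ displayed above the proposition) shows that this image equals $\sum (\beta_i/\alpha_i - \beta'_i/\alpha'_i)$, so condition (3) is both necessary for the invariance in the \emph{only if} direction and sufficient to permit the gluing in the \emph{if} direction. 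Once this step is in hand, the local diffeomorphisms and the adjusted $\check{\Phi}$ paste into the desired global orientation-preserving fiber-preserving diffeomorphism $M \to M'$.
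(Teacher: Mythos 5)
The paper does not actually prove this proposition: it is quoted as a known classification result with pointers to \cite{Neumann-Raymond} and \cite{Eisenbud-Hirsch-Neumann}, so there is no in-paper argument to compare against. Your proposal reconstructs the standard Neumann--Raymond proof from those references, and as a sketch it is essentially correct: the intrinsic characterization of exceptional fibers by the order of the local isotropy gives (1); the classification of fibered solid tori up to orientation- and fiber-preserving diffeomorphism by $\beta_i \bmod \alpha_i$ gives (2); and the rational Euler number gives (3). For the converse, your reduction of the gluing problem to realizing the boundary framing discrepancies by a change of section over $\check{\Sigma}$ is the right mechanism; it is worth being explicit that conditions (1) and (2) first guarantee each discrepancy is an integer $k_i = (\beta_i - \beta'_i)/\alpha_i$, that the classes realizable by a bundle automorphism of $\check{M}$ (equivalently by a map $\check{\Sigma}\rightarrow S^{1}$) are exactly those with $\sum_i k_i = 0$, and that this integral condition coincides with the vanishing you state in $H^{2}(\Sigma;\mathbb{Q})$ because $H^{2}(\Sigma;\Z)\cong\Z$ injects into $\mathbb{Q}$. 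The only places where care beyond your sketch is needed are the orientation conventions (the attaching matrix displayed before the proposition is orientation-reversing, which is what pins down the sign of $\beta_i \bmod \alpha_i$ rather than $\pm\beta_i$) and the bookkeeping when padding both lists with regular-fiber pairs; neither affects the correctness of the outline.
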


In order to associate naturally an NMS-foliation with a Seifert fibration $p:M\rightarrow\Sigma$, we use the notion of {\em round Morse functions}, which are also called Morse-Bott functions whose critical submanifolds are one-dimensional.  That is, a round Morse function is a function whose critical set is a disjoint union of embedded circles and at a critical point it is a Morse type singular point in the transverse direction to the critical circle. For the details, we refer the readers to \cite{mytop} and  \cite{Banyaga-Hurtubise} for example.  Suppose that $p:M\rightarrow\Sigma$ is a Seifert fibration and $f:\Sigma\rightarrow\R$ is a Morse function.  We assume that each exceptional fiber of $p$ lies over a critical point of index $0$ of $f$.  Then the composition $f\circ p:M\rightarrow\R$ is a round Morse function.  Note that exceptional fibers with Seifert pair $(2, \beta)$ are allowed to be over critical points in the case twisted round handles are admitted.  Choosing a Riemann metric $\langle - , -\rangle$ on $M$, we have a gradient vector field $\mathrm{grad}(f\circ p)$ by $\langle\mathrm{grad}(f\circ p), -\rangle = -d(f\circ p)$ and we obtain an NMS flow from $\mathrm{grad}(f\circ p)$ by pushing it along all critical circles.  That is, let $\theta$ denote the coordinate of the second factor of the framing $D^{2}\times S^{1}\approx p^{-1}(D_{i})$ of a fibered solid torus and $X$ a vector field on $M$ which is equal to $\frac{\partial}{\partial\theta}$ near the critical circles and zero away from them.  Then, it is easily seen that $\mathrm{grad}(f\circ p) + X$ is an NMS flow on $M$.  Note that the homology of the NMS-foliation defined by this NMS flow is determined essentially by the Morse function $f:\Sigma\rightarrow\R$.  

Now suppose that $(g;\beta_{1}/\alpha_{1},\ldots , \beta_{m}/\alpha_{m})$ is the Seifert invariant of a Seifert fibration $p:M\rightarrow\Sigma$ each exceptional fiber of which lies over a critical point of index $0$ of a Morse function $f:\Sigma\rightarrow\R$.  Moreover, $f$ has only one critical point of index $2$.  
By our setting, we may see the RHD of $M$ through the (ordinary) handle decomposition of $\Sigma$ with $f$.  Thus, if $\Sigma = (\sum_{i=1}^{m}h^{0}_{i})+(\sum_{i=1}^{m-1}h^{1}_{i})+(\sum_{i=1}^{2g}h^{1}_{m+i-1})+h^{2}$ is the handle decomposition of $\Sigma$ with $f$, for example, the following figure may be considered as a description of the RHD of $M$ which is the pull-back of this handle decomposition by $p$.  Note that $h^{k}_{i}$ denotes an ordinary $k$-handle and in fact the figure shows $\Sigma\setminus\mathrm{Int}(h^{2})$.  Let $\mathcal{F}(p, f)$ denote the oriented one-dimensional foliation on $M$ obtained from the associated NMS flow as above.  
\begin{figure}[ht]
	\centering
	\includegraphics[height=5cm]{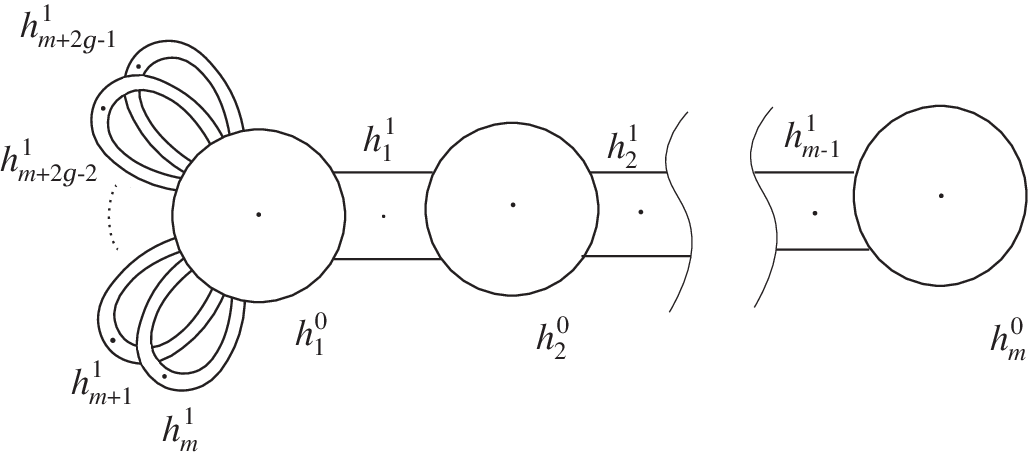}
	\caption{A handle decomposition of $\Sigma\setminus\mathrm{Int}(h^{2})$ for a Seifert fibration $p:M\rightarrow\Sigma$}
	\label{fig_SF_RHD/HD}
\end{figure}
In this example, our chain complex 
$0\rightarrow C^{^\mathrm{RHD}}_{2}\stackrel{\hspace{1em}\partial^{^\mathrm{RHD}}_{2}}{\longrightarrow}C^{^\mathrm{RHD}}_{1}\stackrel{\hspace{1em}\partial^{^\mathrm{RHD}}_{1}}{\longrightarrow}C^{^\mathrm{RHD}}_{0}\rightarrow 0$ 
can be calculated as follows:
The chain groups are
\[
\left\{\begin{array}[]{lcl}
C^{^\mathrm{RHD}}_{2} & = & \Z \medskip\\
C^{^\mathrm{RHD}}_{1} & = & \Z^{m+2g-1} \medskip\\
C^{^\mathrm{RHD}}_{0} & = & \Z^{m}  
\end{array}\right.
\]
where $\Z^{k}$ denotes the direct sum of $k$ infinite cyclic groups, i.e. the free Abelian group of rank $k$.  The boundary operators are $\partial^{^\mathrm{RHD}}_{2}=0$ and $\partial^{^\mathrm{RHD}}_{1}=0$ on the last $2g$ summands and it is expressed on the first $m-1$ summands as 
\[
\partial^{^\mathrm{RHD}}_{1}|\Z^{m-1} = 
\left[\begin{array}{ccccc}
 \alpha_{1} &  0          & \cdots & \cdots &  0 \\
-\alpha_{2} &  \alpha_{2} & \ddots &        &  \vdots \\
 0          & -\alpha_{3} & \ddots & \ddots &  \vdots \\
\vdots      & \ddots      & \ddots & \ddots &  0 \\
\vdots      &             & \ddots & \ddots &  \alpha_{m-1} \\         
0           & \cdots      & \cdots & 0      & -\alpha_{m}
\end{array}
\right]
\]
Suppose that the elementary divisors of $\partial^{^\mathrm{RHD}}_{1}$ are $e_{1},\ldots, e_{r}$. Since $\alpha_{i}\geq 1$, we have $r = m-1$ and the following:\[
 \left\{\begin{array}{lcl}
  H^{^\mathrm{NMS}}_{2}(\mathcal{F}(p, f)) & \cong & \Z \\
  H^{^\mathrm{NMS}}_{1}(\mathcal{F}(p, f)) & \cong & \Z^{2g} \\
  H^{^\mathrm{NMS}}_{0}(\mathcal{F}(p, f)) & \cong & (\Z/e_{1}\Z)\oplus(\Z/e_{2}\Z)\oplus\cdots\oplus(\Z/e_{m-1}\Z)\oplus\Z
        \end{array}\right. 
\]
It is easily seen that if $\alpha_{1},\ldots,\alpha_{m}$ are mutually coprime, then the elementary divisors are all equal to $1$, hence $H^{^\mathrm{NMS}}_{0}(\mathcal{F}(p, f))\cong\Z$, and $H^{^\mathrm{NMS}}_{*}(\mathcal{F}(p, f))\cong H_{*}(\Sigma)$.

%

\vspace{7ex}

\end{document}